\title[Riesz transforms for Jacobi expansions]
    {Riesz transforms for Jacobi expansions}
\author[A.Nowak]{Adam Nowak}
\author[P.Sj\"ogren]{Peter Sj\"ogren}
\address{
\noindent Adam Nowak, \newline
    Institute of Mathematics and Computer Science, \newline
    Wroc\l{}aw University of Technology, \newline
    Wyb{.} Wyspia\'nskiego 27,
    PL-50--370 Wroc\l{}aw, Poland
    }
\email{anowak@pwr.wroc.pl}
\address{
\noindent Peter Sj\"ogren \newline
    Mathematical Sciences, 
    University of Gothenburg \newline
    Mathematical Sciences,
    Chalmers University of Technology \newline 
    SE-412 96 G\"oteborg, Sweden
    }
\email{peters@math.chalmers.se}
\theoremstyle{plain}
\newtheorem{thm}{Theorem}[section]
\newtheorem{lem}[thm]{Lemma}
\newtheorem{prop}[thm]{Proposition}
\newtheorem{cor}[thm]{Corollary}
\theoremstyle{definition}
\theoremstyle{remark}
\newtheorem*{rem*}{Remark}
\newtheorem{rem}[thm]{Remark}
\DeclareMathOperator{\grad}{grad}
\DeclareMathOperator{\divergence}{div}
\DeclareMathOperator{\kernel}{Ker}
\DeclareMathOperator{\domain}{Dom}
\theoremstyle{plain}
\def\R{\mathbb R}
\def\N{\mathbb N}
\def\jac{P} 		
\def\P{S}   			
\def\m{\varrho} 	
\def\ab{(\alpha,\beta)}
\def\J{\mathcal J} 
\def\JE{\mathbb J} 
\def\sqr{\Phi}
\def\e{e}
\def\d{d}
\def\Nset{\mathbb N}
\def\Rset{\mathbb R}
\begin{document}

\begin{abstract}
We define Riesz transforms and conjugate Poisson integrals
associated with multi-dimensional Jacobi expansions.
Under a slight restriction on the type parameters,
we prove that these operators are bounded in $L^p,\; 1<p<\infty,$
with constants independent of the dimension.
Our tools are suitably defined $g$-functions and Littlewood-Paley-Stein theory,
involving the Jacobi-Poisson semigroup and modifications of it.
\end{abstract}

\maketitle

\footnotetext{
\emph{\noindent 2000 Mathematics Subject Classification:} primary 42C10; secondary 42B25\\
\emph{Key words and phrases:} Jacobi expansions, Riesz transforms, g-functions,
    dimension-free estimates, conjugate Poisson integrals

Research of both authors supported by the European Commission via the Research Training Network
``Harmonic Analysis and Related Problems'', contract HPRN-CT-2001-00273-HARP.

The first-named author was also supported by MNiSW Grant N201 054 32/4285.
}


\section{Introduction}
\label{sec:introduction}

The aim of this paper is to study the Riesz transform
$\mathcal{R}^{\ab} = (R^{\ab}_1,\ldots,R^{\ab}_d)$
naturally associated with multi-dimensional Jacobi polynomial expansions of type $\ab.$
Our main result is contained in Theorem \ref{main_result_of_paper}:
we prove that if $\alpha$ and $\beta$ are multi-indices whose components $\alpha_j$ and
$\beta_j$ belong to $[-1\slash 2, \infty)$ then each
$R^{\ab}_j, \; j=1,\ldots,d,$ is bounded in $L^p$ (with the appropriate
measure) for $1 < p < \infty$, and the corresponding operator norms are
independent of the dimension $d$
and the type multi-indices $\alpha,\beta.$
As a consequence, we obtain boundedness and convergence results for the associated
conjugate Poisson integrals, see Corollary \ref{conjugate_result} below.

Our methods are analytic and based on the Littlewood-Paley-Stein theory
contained in Stein's monograph \cite{St}.
We construct appropriate square functions that relate a function and its
Riesz transform, and then prove that these square functions
satisfy two-sided $L^p$ inequalities, $1 < p < \infty.$
The same scheme was exploited by Guti\'errez \cite{Gu}, who considered Riesz transforms
associated with multi-dimensional Hermite expansions, and by one of the authors in
\cite{No}, where Riesz transforms for multi-dimensional Laguerre expansions were studied.
The case of the Jacobi expansions is certainly more complex than that of Hermite, 
but on the other hand to some extent comparable to the Laguerre case.

Riesz transforms and conjugate Poisson integrals are important
objects in harmonic analysis as well as in the theory of partial differential equations.
The study of these objects in the context of orthogonal expansions
was initiated by the fundamental work of Muckenhoupt
and Stein \cite{MuS}, which treated, among other things, one-dimensional ultraspherical
expansions. Then Muckenhoupt elaborated necessary tools and
investigated Riesz transforms (or rather conjugate mappings)
for Hermite and Laguerre
expansions \cite{Mu1,Mu2}. However, he worked in the one-dimensional
setting and used methods which are inapplicable in higher
dimensions. In fact, passing with Riesz transforms to higher
dimensions turned out not to be as straightforward as one could
expect. The first corresponding multi-dimensional result was
obtained by P.A. Meyer \cite{Me}, who proved by probabilistic
methods the $L^p$ boundedness of the Riesz-Hermite transforms in
arbitrary dimension. Later many authors gave other proofs, see the
survey \cite{Sj}. The Laguerre setting is more involved than that of
Hermite, and the $L^p$ boundedness of the multi-dimensional
Riesz-Laguerre transforms was proved recently by Guti\'errez, Incognito
and Torrea \cite{GIT}
(for half-integer type multi-indices) and by the first-named author \cite{No}
(for a continuous range of type multi-indices). Riesz transforms and
conjugate Poisson integrals associated with multi-dimensional Jacobi
polynomials, the remaining of the three classical orthogonal
polynomial systems, are treated in the present paper. 

We note that in the one-dimensional setting conjugacy for Jacobi polynomial
expansions was considered by Li \cite{Li}, and recently by Buraczewski et al.
\cite{Bur} (only the ultraspherical case). 
However, the settings considered in \cite{MuS, Li, Bur} have the common
disadvantage that the underlying differential operators have (for almost all choices of
the parameters) nontrivial zero order terms and hence the associated
semigroups are not symmetric diffusion semigroups in the sense of \cite{St}.
This obviously makes a contrast with
the settings of Hermite and Laguerre polynomial expansions.
The reason for perturbing the ``genuine'' Jacobi diffusion operator with a constant
term is purely technical and caused by the lack of an explicit expression
for either the heat or the Poisson kernel, without this modification.
Nevertheless, in this paper we overcome the difficulty and
consider the ``genuine'' Jacobi setting. Consequently, we introduce
definitions of multi-dimensional Riesz-Jacobi transforms and
corresponding conjugate Poisson integrals which in one dimension
differ somewhat from those in \cite{MuS,Li,Bur}, but on the other
hand are more natural and perfectly fit into a unified scheme of
conjugacy satisfied by all the three classical orthogonal expansions.
The crucial ingredients of this scheme (discussed in Section
\ref{sec:Riesz}) are Cauchy-Riemann type equations that link all
involved operators and systems of supplementary Riesz transforms and
conjugate Poisson integrals.
Some necessary tools here are modified versions of the Jacobi Laplacian
and the corresponding modified Jacobi-Poisson semigroups.

An interesting aspect of the $L^p$ estimates in the multi-dimensional setting is
the question whether the corresponding $L^p$ constants can be chosen independently
of the dimension, and this is related to analysis in infinite dimension.
Such dimension-free $L^p$ estimates are known to hold for Riesz-Hermite and Riesz-Laguerre
transforms. The results of our paper show that the situation is similar in the
Jacobi case
as long as each partial Riesz transform $R_j^{\ab}$ is taken separately.
When the vector $\mathcal{R}^{\ab}$ is considered, the situation gets substantially
more complicated,
and the question of dimension independence remains open.

The paper is organized as follows.
Section \ref{sec:preliminaries} contains basic facts and notation needed in the sequel.
Then Section \ref{sec:supplementary} introduces modified semigroups and crucial 
estimates between related heat kernels and the Jacobi heat kernel.
These estimates are proved by means of a parabolic PDE technique.
In Section \ref{sec:square_functions} we define suitable square functions and prove
relevant $L^p$ inequalities. The main results of this section, 
gathered in Theorem \ref{main_theorem}, are also of independent interest.
Finally, in Section \ref{sec:Riesz} the results concerning
Riesz transforms and conjugate Poisson integrals are proved. 
Also, supplementary systems of such operators are introduced and briefly studied,
complementing a conjugacy scheme for Jacobi expansions.


\section{Preliminaries}
\label{sec:preliminaries}

Given $\alpha, \beta > -1,$ the one-dimensional \emph{Jacobi polynomials}
of type $(\alpha,\beta)$ are defined by the Rodrigues formula
\[
\jac_k^{(\alpha,\beta)} (x) = \frac{(-1)^k}{2^k k!} (1-x)^{-\alpha}(1+x)^{-\beta}
    \frac{{\d}^k}{{\d}x^k} \big( (1-x)^{\alpha+k}(1+x)^{\beta+k} \big),
    \quad k \in \Nset ,\;\; -1<x<1.
\]
Note that each $\jac_k^{(\alpha,\beta)}$ is a polynomial of degree $k.$
Given multi-indices
$\alpha = (\alpha _1, \ldots , \alpha _d)$ and $\beta = (\beta_1,\ldots,\beta_d),\;$
$\alpha,\beta \in (-1,\infty)^d,$ the $d$-dimensional
Jacobi polynomials of type $(\alpha,\beta)$ are tensor products
$$
\jac_k^{(\alpha,\beta)} (x) = \prod _{i=1}^{d} \jac_{k_i}^{(\alpha_i,\beta_i)}(x_i),
\qquad k \in \Nset^d, \quad x \in (-1,1)^d.
$$
Jacobi polynomials have many interesting properties, see for instance the classical monograph 
by Szeg\"o \cite{Sz}.
In particular, cf. \cite[(4.21.7)]{Sz},
\begin{equation} \label{diff_jac_pol}
\partial_{x_i} \jac^{(\alpha,\beta)}_{k}(x) =
    \frac{1}{2} (k_i+\alpha_i+\beta_i+1)\jac^{(\alpha+e_i,\beta+e_i)}_{k-e_i}(x),
    \quad \quad i=1,\ldots,d,
\end{equation}
$e_i$ denoting the $i$th coordinate vector in $\Rset^d.$
Here and later on we use the convention that
$\jac^{(\alpha+e_i,\beta+e_i)}_{k-e_i}=0$ if $k_i-1<0.$

Consider the beta-type measure $\m_{(\alpha,\beta)}$ in $(-1,1)^d$ given by
$$
d\m_{(\alpha,\beta)}(x) = \prod^{d}_{i=1} (1-x_i)^{\alpha_i} (1+x_i)^{\beta_i} \, dx.
$$
The \emph{Jacobi differential operator}
$$
\J^{\ab} = - \sum^{d}_{i=1} \bigg[ (1-x^2_i) \partial^2_{x_i}
+ \big(\beta_i - \alpha_i - (\alpha_i + \beta_i + 2) x_i \big) \partial_{x_i} \bigg]
$$
is nonnegative and symmetric in $L^2(d\m_{\ab})$ on the domain $C^{\infty}_c((-1,1)^d).$
Each Jacobi polynomial $\jac^{\ab}_k$ is an eigenfunction of $\J^{\ab}$
with the corresponding eigenvalue
$$
\lambda_k = \lambda_k^{\ab} = \lambda_{k_1} + \ldots + \lambda_{k_d},
    \quad \textrm{where} \quad \lambda_{k_i} = k_i (k_i + \alpha_i + \beta_i + 1)
$$
(in the sequel we omit the superscript $\ab$ in $\lambda_k^{\ab}$ whenever it makes no confusion).
Moreover, the system $\{\jac^{\ab}_{k} : k \in \Nset^d\}$
constitutes an orthogonal basis in the Hilbert space $L^2(d\m_{\ab}).$
Thus, any function $f \in L^2(d\m_{\ab})$ has the expansion
\[
f = \sum_{k \in \Nset^d} a_k(f) \jac^{\ab}_k,
\]
with the Fourier-Jacobi coefficients given by
$$
a_k(f)  =  \langle f, \jac_k^{\ab}
    \rangle_{{\ab}} \slash \|\jac_k^{\ab}\|^{2}_{2,\ab};
$$
here, and also later on, we use the notation $\langle f, \jac^{\ab}_k \rangle_{\ab}
= \int f(y) \jac^{\ab}_k(y)\, d\m_{\ab}(y)$ and $\|\cdot\|_{p,\ab} \equiv \|\cdot\|_{L^p(d\m_{\ab})}$,
$1\le p \le \infty$.
The squared norm appearing above is known explicitly (cf. \cite[(4.3.3)]{Sz})
to be
\begin{equation} \label{l2norm}
\|\jac_k^{\ab}\|^{2}_{2,\ab} = \prod_{i=1}^d
    \frac{2^{\alpha_i+\beta_i+1}\Gamma(k_i+\alpha_i+1)\Gamma(k_i+\beta_i+1)}
    {(2k_i+\alpha_i+\beta_i+1)\Gamma(k_i+\alpha_i+\beta_i+1)\Gamma(k_i+1)},
\end{equation}
where for $k_i=0$ the product $(2k_i+\alpha_i+\beta_i+1) \Gamma(k_i+\alpha_i+\beta_i+1)$
must be replaced by $\Gamma(\alpha_i+\beta_i+2)$. Note that by Stirling's formula
there exists a constant $C$ such that
\begin{equation} \label{L2normest}
\|\jac_k^{\ab}\|^{-2}_{2,\ab} \le C (k_1+1) \cdot \ldots \cdot (k_d+1), \qquad k \in \N^d.
\end{equation}

The operator $\J^{\ab}$ has a self-adjoint extension (denoted by the same symbol) given by\
\begin{equation} \label{cl_J}
\J^{\ab} f = \sum_{k \in \N^d} a_k(f) \lambda_k \jac^{\ab}_k
\end{equation}
on the domain
$$
\domain \J^{\ab} = \Big\{ f \in L^2(d\m_{\ab}) : \sum_{k \in \N^d} |a_k(f)|^2
    \lambda_k^2 \|\jac^{\ab}_k\|^2_{2,\ab} < \infty \Big\}.
$$
Then the spectrum of $\J^{\ab}$ is the discrete set $\{\lambda_k : k \in \N^d\}$, and
the corresponding spectral resolution is given by \eqref{cl_J}. The inclusion
$C^{\infty}_c( (-1,1)^d ) \subset \domain \J^{\ab}$ can be easily justified since
for such functions $f$ one has $a_k(f) \lambda_k = a_k(\J^{\ab}f)$.
This identity is a consequence of the symmetry of $\J^{\ab}$ and follows by
using the divergence form of the Jacobi operator,
\begin{equation} \label{divergent}
\J^{\ab} = - \sum_{i=1}^d (1-x_i)^{-\alpha_i}(1+x_i)^{-\beta_i}
    \partial_{x_i} \big[ (1-x_i)^{\alpha_i+1} (1+x_i)^{\beta_i+1} \partial_{x_i} \big],
\end{equation}
and integrating by parts.
The same argument shows that $\domain \J^{\ab}$ contains the space $C^2_b((-1,1)^d)$
of all bounded $C^2$ functions on $(-1,1)^d$ with bounded first and second order
derivatives.

The semigroup generated by $\J^{\ab}$ is called the \emph{Jacobi semigroup}
and will be denoted by $T^{\ab}_t.$ We have for $f \in L^2(d\m_{\ab})$
\begin{equation} \label{def_heat_ser}
T^{\ab}_t f = \exp\big(-t \J^{\ab}\big)f = 
	\sum_{k \in \Nset^d} a_k(f) {\e}^{-t \lambda_k} \jac^{\ab}_k,
\end{equation}
the convergence being in $L^2(d\m_{\ab}).$
The above series may also be regarded as the definition
of $T^{\ab}_t f$ for $f \in L^1(d\m_{\ab})$
(and hence for $f \in L^p(d\m_{\ab}),$ $1 \le p \le\infty$),
since for such $f$ the Fourier--Jacobi expansion of $T^{\ab}_t f$
converges pointwise. To give a brief justification of this fact, we note that
Jacobi polynomials satisfy the estimate
(cf. \cite[(7.32.2)]{Sz})
\begin{equation} \label{jac_est}
\big| P^{\ab}_k(x) \big| \le C \,
    (k_1+1)^{\alpha_1+\beta_1+2} \cdot \ldots \cdot (k_d+1)^{\alpha_d+\beta_d+2}
\end{equation}
uniformly in $k \in \Nset^d$ and $x \in (-1,1)^d.$ Hence, with the aid of \eqref{L2normest},
it is easily seen that the absolute growth of the Fourier-Jacobi coefficients $a_k(f)$
is at most polynomial in $k$, and so the series converges absolutely due to the exponentially
decreasing factor $\e^{-t\lambda_k}.$

To obtain an integral representation of $T^{\ab}_t,$ we insert the integral defining
$a_k(f)$ into \eqref{def_heat_ser} and then, using Fubini's theorem, interchange
the order of summation and integration. The result is
\begin{equation} \label{def_heat}
T^{\ab}_t f(x) = \int G^{\ab}_t(x,y) f(y) \, d\m_{\ab}(y), \quad \quad
f \in L^1(d\m_{\ab}),
\end{equation}
where
\[
G^{\ab}_t(x,y) = \sum_{k \in \Nset^d} {\e}^{-t\lambda_k} \jac^{\ab}_k(x) \jac^{\ab}_k(y)
    \slash \|\jac_k^{\ab}\|^{2}_{2,\ab}.
\]
The above kernel is smooth for $x,y \in (-1,1)^d,\; t>0$,
and the integral in (\ref{def_heat}) is absolutely convergent.
In contrast with the Hermite and Laguerre cases, an explicit formula for
the heat kernel
$G^{\ab}_t(x,y)$ is not known.
The main obstacle in computing the kernel comes from the fact that
the Jacobi eigenvalues $\lambda_k$ are not linearly distributed.
Nevertheless, $G^{\ab}_t(x,y)$ was proved to be strictly positive for
$x,y \in (-1,1)^d,\; t>0,$ by Karlin and McGregor \cite{KM}.
The positivity also follows from more general results by Beurling and Deny,
see \cite[Section 1.3]{Da}.

It is well known that $T_t^{\ab}$ is a \emph{symmetric diffusion semigroup}
in the sense of \cite[Chapter 3]{St}
(in fact $T_t^{\ab}$ is a transition semigroup for the Jacobi diffusion
process, which already received attention, cf. \cite{KM} and references there).
In particular, $T_t^{\ab} {\bf{1}} = {\bf{1}}$ and
\[
\|T^{\ab}_t f\|_{p,\ab} \le \|f\|_{p,\ab},
\quad \quad 1 \le p \le \infty.
\]

The corresponding Poisson semigroup $\P^{\ab}_t = \exp(-t (\J^{\ab})^{1\slash 2})$ is for $f \in L^1(d\m_{\ab})$ defined by
\[
\P^{\ab}_t f = \sum_{k \in \Nset^d} a_k(f) {\e}^{-t \lambda_k^{1\slash 2}} \jac^{\ab}_k.
\]
Like \eqref{def_heat_ser}, the series is pointwise absolutely convergent.
Now, using the identity
$$
e^{-t\gamma} = \frac{1}{\sqrt{\pi}} \int_0^{\infty} \frac{e^{-u}}{\sqrt{u}}
    e^{-\gamma^2 t^2\slash(4u)} \, du, \qquad t>0, \quad \gamma \ge 0,
$$
and Fubini's theorem, we express $\P^{\ab}_t$
as a weighted average of $T^{\ab}_t:$
\[
\P^{\ab}_t f(x) = \frac{1}{\sqrt{\pi}} \int ^{\infty}_0 \frac{{\e}^{-u}}{\sqrt{u}}
T^{\ab}_{t^2 \slash (4u)} f(x) \,du, \quad \quad f \in L^1(d\m_{\ab}).
\]
This is usually referred to as the subordination formula or principle.

By general theory (see \cite[p.\,73]{St}) it follows that for $1<p \le \infty$ the maximal operators
$T^{\ab}_{*} f(x) = \sup_{t>0} \big| T^{\ab}_t f(x) \big|$ and
$\P^{\ab}_{*} f(x) = \sup_{t>0} \big| \P^{\ab}_t f(x) \big|$ satisfy
\begin{equation} \label{max_ineq}
\big{\|} T^{\ab}_{*} f \big{\|}_{p,\ab} +
\big{\|} \P^{\ab}_{*} f \big{\|}_{p,\ab} \le C_p \; \|f\|_{p,\ab},
\quad \quad f \in L^p(d\m_{\ab}).
\end{equation}
Let us emphasize that the constant $C_p$ depends neither on the dimension $d$ nor on
the type multi-indices $\alpha,\beta.$ An important consequence of (\ref{max_ineq})
and the fact that the Jacobi polynomials span $L^p(d\m_{\ab}),\; 1 < p < \infty,$
is
\begin{equation} \label{max_conv}
\lim_{t\to 0^{+}} T^{\ab}_t f(x) = \lim_{t\to 0^{+}} \P^{\ab}_t f(x) = f(x) \;\; \textrm{a.e.},
    \quad \quad f \in L^p(d\m_{\ab}), \quad 1 < p < \infty.
\end{equation}
Similarly, since continuous functions on $[-1,1]^d$ may be uniformly approximated by polynomials,
hence by linear combinations of Jacobi polynomials, it follows by \eqref{max_ineq}
specified to $p=\infty$ that for $f \in C([-1,1]^d)$ the convergence in \eqref{max_conv} is uniform
with respect to  $x \in (-1,1)^d$.
These facts will be used later without further mention.

We define the $i$th partial derivative associated with $\J^{\ab}$ by
\[
\delta_i = \sqr_i \partial_{x_i},
\]
with the coefficient function given on $(-1,1)^d$ by $\Phi_i(x)=\Phi(x_i)$, where
$$
\sqr(t) = \sqrt{1-t^2}.
$$
A reason for using such derivatives is the following
(for further motivation see Lemma \ref{lem_1} and \eqref{mult_form}).
The formal adjoint of $\delta_i$ in $L^2(d\m_{\ab})$ is given by
\[
\delta^{*}_i = - \sqr_i \partial_{x_i} + (\alpha_i+1\slash 2)
    \sqrt{\frac{1+x_i}{1-x_i}} - (\beta_i+1\slash 2) \sqrt{\frac{1-x_i}{1+x_i}},
\]
and we have the factorization
\[
\J^{\ab} = \sum _{i=1}^d \delta^{*}_i \delta_{i}.
\]
The last identity may be written in a compact form
\[
\J^{\ab} = \divergence_{\ab} \grad_{\ab},
\]
where $\grad_{\ab} = (\delta_1,\ldots, \delta_d)$ and
$\divergence_{\ab} F = \sum^d_{i=1} \delta^{*}_i f_i$
for a vector-valued function $F(y)= (f_1(y),\ldots ,f_d(y)).$

The Riesz-Jacobi transform $\mathcal{R}^{\ab} = (R^{\ab}_1, \ldots, R^{\ab}_d)$ is
then formally defined by
\begin{equation} \label{def_riesz}
\mathcal{R}^{\ab} = \grad_{\ab} \big( \J^{\ab} \big)^{-1\slash 2} \Pi_0,
\end{equation}
where $\Pi_0$ denotes the orthogonal projection onto $(\kernel \J^{\ab})^{\perp},$
the orthogonal complement of the subspace of $L^2(d\varrho_{\ab})$ consisting of all 
constant functions.
Note that (\ref{def_riesz}) makes sense for Jacobi polynomials (hence for all polynomials)
and that by (\ref{diff_jac_pol}) we have
\begin{equation} \label{riesz_pol}
R_i^{\ab} \jac_k^{\ab} = \frac{1}{2}\lambda_k^{-1\slash 2} (k_i + \alpha_i+\beta_i+1)
    \sqr_i \jac^{(\alpha+e_i,\beta+e_i)}_{k-e_i}, \quad \quad k_i>0,
\end{equation}
and $R_i^{\ab} \jac_k^{\ab} = 0$ if $k_i=0.$
A crucial observation which must be made here is that $R_i^{\ab} \jac_k^{\ab}$ is not
a polynomial, which is a consequence of the action of the Jacobi derivatives
$\delta_i$ on $\jac_k^{\ab}.$
This effect (which is absent in the Hermite, but present in the Laguerre setting) makes the
analysis more complex, involving $d$ auxiliary orthogonal systems and semigroups.


\section{The supplementary semigroups}
\label{sec:supplementary}

We introduce additional semigroups $\widetilde{\P}^{\ab,i}_t, \; i=1,\ldots,d,$
generated by slight modifications of the operator
$(\J^{\ab})^{1\slash 2}.$ As we shall see, they play an essential role in
the study of Riesz transforms and conjugacy for Jacobi expansions.
The modified Poisson semigroups are needed since
the Jacobi derivatives $\delta_i$ do not commute with the Jacobi-Poisson semigroup.
Indeed, they make an essential step possible, namely swapping
the order of the operators in $\delta_i P^{\ab}_t,$ see \eqref{ex_ord} below.

To proceed, we first define the modified Jacobi operators
\[
M^{\ab}_i = \J^{\ab} + [\delta_i,\delta^{*}_i], \quad \quad i=1,\ldots,d,
\]
where the commutators $[\delta_i,\delta^{*}_i]= \delta_i \delta^{*}_i - \delta^{*}_i \delta_i$
are easily computed to be
$$
[\delta_i,\delta^{*}_i] = \frac{\alpha_i + 1\slash 2}{1-x_i} + \frac{\beta_i + 1\slash 2}{1+x_i}.
$$
Observe that each $M^{\ab}_{i}$ is symmetric and nonnegative in $L^2(d\m_{\ab})$ on the domain
$C^{\infty}_c( (-1,1)^d )$, since for such functions $f$
\[
\langle M^{\ab}_i f,f \rangle _{{\ab}} =
\int \Big( |\delta^{*}_i f|^{2} + \sum_{j \neq i} |\delta_j f|^{2} \Big) d\m_{\ab}.
\]

The following simple lemma is crucial.
\begin{lem} \label{lem_intr}
Given $i=1,\ldots,d,$ the functions $\sqr_i \jac^{(\alpha+e_i,\beta+e_i)}_{k}$
are eigenfunctions of $M^{\ab}_{i},$ with eigenvalues $\lambda^{\ab}_{k+e_i}.$
Moreover, the system
$$
\big\{\sqr_i \jac^{(\alpha+e_i,\beta+e_i)}_{k} : k\in \Nset^d\big\}
$$
forms an orthogonal basis in $L^2(d\m_{\ab}).$
\end{lem}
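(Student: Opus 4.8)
The plan is to exploit the factorization $\J^{\ab} = \sum_{j} \delta^*_j \delta_j$ together with the differentiation formula \eqref{diff_jac_pol}, turning the eigenfunction claim into an intertwining relation rather than a direct (and messy) computation of $M^{\ab}_i$ applied to $\sqr_i \jac^{(\alpha+e_i,\beta+e_i)}_k$. First I would rewrite the modified operator in the transparent form
\[
M^{\ab}_i = \J^{\ab} + \delta_i\delta^*_i - \delta^*_i\delta_i = \delta_i \delta^*_i + \sum_{j \neq i} \delta^*_j \delta_j ,
\]
which is just the factorization of $\J^{\ab}$ with its $i$th summand $\delta^*_i\delta_i$ replaced by $\delta_i\delta^*_i$. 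The key algebraic step is the commutation identity
\[
[\delta_i, \J^{\ab}] = [\delta_i, \delta^*_i]\, \delta_i , \qquad\text{equivalently}\qquad \delta_i \J^{\ab} = M^{\ab}_i \delta_i ,
\]
which I would verify by observing that $\delta_i = \sqr_i \partial_{x_i}$ acts only on the variable $x_i$, while $\delta_j,\delta^*_j$ for $j\neq i$ act on the separate variable $x_j$ with coefficients depending only on $x_j$. Hence $\delta_i$ commutes with each term $\delta^*_j \delta_j$, $j \neq i$, so only the $j=i$ term of $[\delta_i,\J^{\ab}]$ survives, producing exactly $[\delta_i,\delta^*_i]\delta_i$ since $[\delta_i,\delta_i]=0$. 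As all functions involved are smooth on $(-1,1)^d$, this is a genuine pointwise identity of smooth functions and no domain issues arise.

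With the intertwining in hand the eigenvalue assertion is immediate. Applying $\delta_i$ to $\J^{\ab}\jac^{\ab}_k = \lambda^{\ab}_k \jac^{\ab}_k$ and using $\delta_i \J^{\ab} = M^{\ab}_i \delta_i$ gives $M^{\ab}_i (\delta_i \jac^{\ab}_k) = \lambda^{\ab}_k\, \delta_i \jac^{\ab}_k$. By \eqref{diff_jac_pol}, for $k_i \geq 1$ we have $\delta_i \jac^{\ab}_k = \tfrac12 (k_i+\alpha_i+\beta_i+1)\, \sqr_i \jac^{(\alpha+e_i,\beta+e_i)}_{k-e_i}$, a nonzero multiple of $\sqr_i \jac^{(\alpha+e_i,\beta+e_i)}_{k-e_i}$. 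Cancelling the constant and reindexing $k - e_i \mapsto k$ (so that $k$ again runs over all of $\Nset^d$ and $\lambda^{\ab}_k$ becomes $\lambda^{\ab}_{k+e_i}$) yields
\[
M^{\ab}_i \big(\sqr_i \jac^{(\alpha+e_i,\beta+e_i)}_{k}\big) = \lambda^{\ab}_{k+e_i}\, \sqr_i \jac^{(\alpha+e_i,\beta+e_i)}_{k}, \qquad k \in \Nset^d,
\]
as claimed, the convention $\jac^{(\alpha+e_i,\beta+e_i)}_{k-e_i}=0$ for $k_i=0$ causing no gap in the reindexing.

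For the orthogonal basis statement I would avoid arguing through the eigenvalues of $M^{\ab}_i$ (whose multiplicities may overlap) and instead use a unitary change of space. Since $\sqr_i^2 = 1-x_i^2$, one has $\sqr_i^2\, d\m_{\ab} = d\m_{(\alpha+e_i,\beta+e_i)}$, so the multiplication map $Uf = \sqr_i f$ is an isometry of $L^2(d\m_{(\alpha+e_i,\beta+e_i)})$ into $L^2(d\m_{\ab})$; it is onto because any $g \in L^2(d\m_{\ab})$ is $U(g/\sqr_i)$ with $g/\sqr_i \in L^2(d\m_{(\alpha+e_i,\beta+e_i)})$, as $\sqr_i>0$ a.e. Thus $U$ is unitary and carries the orthogonal basis $\{\jac^{(\alpha+e_i,\beta+e_i)}_k : k\in\Nset^d\}$ of $L^2(d\m_{(\alpha+e_i,\beta+e_i)})$ onto $\{\sqr_i \jac^{(\alpha+e_i,\beta+e_i)}_k : k\in\Nset^d\}$, which is therefore an orthogonal basis of $L^2(d\m_{\ab})$. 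I expect no serious obstacle here; the only points requiring care are the bookkeeping in the commutator identity — in particular that $\delta_i \jac^{\ab}_k$ is \emph{not} a polynomial, so the intertwining must be read as a pointwise identity of smooth functions rather than an identity in a polynomial algebra — and the observation that completeness is cleanest via the unitary $U$ rather than via self-adjointness of $M^{\ab}_i$.
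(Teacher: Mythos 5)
Your proposal is correct and takes essentially the same approach as the paper: the paper's proof also rests on the decomposition $M^{\ab}_i=\delta_i\delta_i^{*}+\sum_{j\neq i}\delta_j^{*}\delta_j$ and the differentiation formula \eqref{diff_jac_pol}, and its coordinatewise computation $\delta_i\delta_i^{*}\big(\delta_i\jac^{\ab}_{k+e_i}\big)=\delta_i\big(\delta_i^{*}\delta_i\jac^{\ab}_{k+e_i}\big)=\lambda_{k_i+1}\,\delta_i\jac^{\ab}_{k+e_i}$ is exactly your intertwining identity $M^{\ab}_i\delta_i=\delta_i\J^{\ab}$ in unpacked form. Your unitary multiplication map $f\mapsto\sqr_i f$ between $L^2(d\m_{(\alpha+e_i,\beta+e_i)})$ and $L^2(d\m_{\ab})$ is likewise just an explicit spelling-out of the paper's one-line completeness remark, so there is nothing to fix.
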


\begin{proof}
The first part follows by a direct computation, using the decomposition
$$
M^{\ab}_i = \delta_i \delta_i^{*} + \sum_{j \neq i} \delta_j^{*} \delta_j
$$
and \eqref{diff_jac_pol}, rewritten as
\begin{equation} \label{jac_diff}
\sqr_i \jac^{(\alpha+e_i,\beta+e_i)}_{k} =
        2 (k_i + \alpha_i + \beta_i + 2)^{-1} \delta_i \jac^{\ab}_{k+e_i}.
\end{equation}
Indeed, if $j \neq i$ then $\delta_j^{*} \delta_j$ is the one-dimensional Jacobi operator
in the $j$th coordinate, and hence
$$
\delta_j^{*}\delta_j \big(\sqr_i \jac^{(\alpha+e_i,\beta+e_i)}_{k} \big) =
\lambda_{k_j} \sqr_i \jac^{(\alpha+e_i,\beta+e_i)}_{k}.
$$
To handle $\delta_i \delta_i^{*}$ we write
\begin{align*}
\delta_i \delta_i^{*} \big(\sqr_i \jac^{(\alpha+e_i,\beta+e_i)}_{k} \big) & =
    2 (k_i + \alpha_i + \beta_i + 2)^{-1} \delta_i \delta_i^{*} \delta_i \jac^{\ab}_{k+e_i} \\
& = 2 (k_i + \alpha_i + \beta_i + 2)^{-1} \lambda_{k_i+1} \delta_i \jac^{\ab}_{k+e_i} \\
& = \lambda_{k_i+1} \sqr_i \jac^{(\alpha+e_i,\beta+e_i)}_{k}.
\end{align*}
The second part is a consequence of the fact that the system
$\{\jac^{(\alpha+e_i,\beta+e_i)}_{k} : k\in \Nset^d\}$
is an orthogonal basis in $L^2(d\m_{(\alpha+e_i,\beta+e_i)}).$
\end{proof}

Therefore, given $i \in \{1,\ldots,d\},$ any $f \in L^2(d\m_{\ab})$ has the expansion
\[
f = \sum_{k \in \Nset^d} a_k^i(f) \sqr_i \jac^{(\alpha+e_i,\beta+e_i)}_{k},
\]
with $a^i_k(f) = \big\langle f, \sqr_i \jac_k^{(\alpha+e_i,\beta+e_i)}
    \big\rangle_{{\ab}} \slash
     \|\sqr_i\jac^{(\alpha+e_i,\beta+e_i)}_k\|^2_{2,\ab}.$

Each of the operators $M^{\ab}_i, \; i = 1, \ldots, d,$ has a self-adjoint extension (which we
still denote by the same symbol) given by
\begin{equation} \label{M_ext}
M^{\ab}_i f = \sum_{k \in \N^d} a^i_k(f) \lambda_{k + e_i} \sqr_i
     \jac^{(\alpha+e_i,\beta+e_i)}_k
\end{equation}
on the domain
$$
\domain M^{\ab}_i = \Big\{ f \in L^2(d\m_{\ab}) : \sum_{k \in \N^d} |a_k^i(f)|^2 \lambda_{k+e_i}^2
    \big\|\sqr_i\jac^{(\alpha+e_i,\beta+e_i)}_k\big\|^2_{2,\ab}
     < \infty \Big\};
$$
the inclusions $C^{\infty}_c((-1,1)^d) \subset \domain M^{\ab}_i$ are justified like
the analogous relation for $\J^{\ab}$. Then the spectrum of
$M^{\ab}_i$ is the discrete set $\{ \lambda_{k+e_i} : k \in \N^d \}$, and the spectral
decomposition of $M^{\ab}_i$ is given by \eqref{M_ext}.

\begin{rem}
It is perhaps worth noticing that when $\alpha_i = \beta_i = -1\slash 2$ for some
$i=1,\ldots,d,$ the operators $M^{\ab}_i$ and $\J^{\ab}$ coincide as differential
operators. However, the self-adjoint extensions described above are different, since
the corresponding spectra are not equal. 
The situation is best understood in one dimension by means of the
change of variable $x = \cos \theta$. Then the Jacobi measure 
$d\m_{\ab}$ becomes Lebesgue measure $d\theta$ in $(0,\pi)$,
and the differential operator will be simply $-d^2\slash d\theta^2$.
From \cite[(4.1.7)]{Sz} we have for $k = 0,1,...$
\[
P_k^{(-1/2,-1/2)}(x) = c_k \cos k\theta
\]
and
\[
P_k^{(1/2,1/2)}(x) = c'_k \,\frac{\sin (k+1)\theta}{\sin\theta},
\]
where $x=\cos\theta$ and $c_k$ and $c'_k$ are constants. This means that
the relevant eigenfunction expansion, expressed in the
$\theta$ variable, is simply the Fourier cosine or sine series expansion,
respectively, in $(0,\pi)$. The self-adjoint extensions $\J^{\ab}$
and ${M}_1^{\ab}$ are then defined by differentiating termwise
twice the cosine or sine series, respectively. An $L^2$ function is in
the domain of the extension precisely when the corresponding differentiated
series defines an $L^2$ function. These two domains do not coincide.
Indeed, the constant function $\boldsymbol{1}$ has cosine series $1$ and sine series
\[
\sum_{k=1}^\infty \frac4{\pi(2k-1)}\sin(2k-1)\theta.
\]
Differentiating, we see that $\boldsymbol{1}$ is in the domain of  $\J^{\ab}$
but not in that of ${M}_1^{\ab}$.
\end{rem}

We set
\begin{equation*}
\widetilde{T}^{\ab,i}_t  = \exp\big({-t M^{\ab}_i}\big), \qquad
\widetilde{\P}^{\ab,i}_t = \exp\big({-t(M^{\ab}_i)^{1 \slash 2}}\big), \quad \quad i=1,\ldots,d.
\end{equation*}
Clearly, for $f \in L^2(d\m_{\ab})$ we have
\begin{align} \label{series_def_mod_T}
\widetilde{T}^{\ab,i}_t f & = \sum_{k \in \Nset^d} a_k^i(f)
    {\e}^{-t\lambda_{k+e_i}} \sqr_i \jac^{(\alpha+e_i,\beta+e_i)}_{k}, \\
\label{def_mod_poisson}
\widetilde{\P}_t^{\ab,i} f & = \sum_{k \in \Nset^d} a_k^i(f)
    {\e}^{-t\lambda_{k+e_i}^{1\slash 2}} \sqr_i \jac^{(\alpha+e_i,\beta+e_i)}_{k}.
\end{align}
The above series are also appropriate for defining the operators in question on $L^1(d\m_{\ab})$,
as in the case of $T^{\ab}_t.$ We have the integral representations
\begin{equation} \label{mod_sem_heat_def}
\widetilde{T}_t^{\ab,i}f(x) = \int \widetilde{G}_t^{\ab,i}(x,y) f(y) \,
d\m_{\ab}(y), \quad \quad i =1,\ldots,d,
\end{equation}
where
\begin{align*}
\widetilde{G}_t^{\ab,i}(x,y) 
&    = \sum_{k \in \Nset^d} \frac{{\e}^{-t \lambda^{\ab}_{k+e_i}} \sqr_i(x)
        \jac^{(\alpha+e_i,\beta+e_i)}_k(x) \sqr_i(y)
        \jac^{(\alpha+e_i,\beta+e_i)}_k(y)}
        {\|\sqr_i\jac^{(\alpha+e_i,\beta+e_i)}_k\|^2_{2,\ab}}\\
&        =
    {\e}^{-t(\alpha_i+\beta_i+2)} \sqr_i(x)\sqr_i(y) \sum_{k \in \Nset^d}\frac{
        {\e}^{-t\lambda_k^{(\alpha+e_i,\beta+e_i)}} \jac^{(\alpha+e_i,\beta+e_i)}_k(x)
        \jac^{(\alpha+e_i,\beta+e_i)}_k(y)}
        {\|\jac^{(\alpha+e_i,\beta+e_i)}_k\|^2_{2,(\alpha+e_i,\beta+e_i)}}\\
&        =
        {\e}^{-t(\alpha_i+\beta_i+2)} \sqr_i(x)\sqr_i(y) G_t^{(\alpha+e_i,\beta+e_i)}(x,y).
\end{align*}
As in (\ref{def_heat}), the integral in (\ref{mod_sem_heat_def}) converges absolutely
for $f \in L^1(d\m_{\ab}).$
A connection between $\widetilde{T}^{\ab,i}_t$ and $\widetilde{\P}^{\ab,i}_t$ is given by
the subordination formula
\begin{equation} \label{sub_tild}
\widetilde{\P}^{\ab,i}_t f(x) = \frac{1}{\sqrt{\pi}} \int ^{\infty}_0 \frac{{\e}^{-u}}{\sqrt{u}}
\widetilde{T}^{\ab,i}_{t^2 \slash (4u)} f(x) \,du, \quad \quad i=1,\ldots,d.
\end{equation}

\begin{prop} \label{diff_equations}
Let $f \in L^1(d\m_{\ab})$ and $i \in \{1,\ldots,d\}.$
Then $T^{\ab}_t f(x),$  $\P^{\ab}_t f(x),$  $\widetilde{T}^{\ab,i}_t f(x)$ and
$\widetilde{\P}^{\ab,i}_t f(x)$ are $C^{\infty}$ functions of $(t,x) \in (0,\infty) \times (-1,1)^d.$
Moreover,
\[
\big(\partial_t + \J^{\ab} \big) T^{\ab}_t f(x)
\; = \; 0 \; = \;
\big(\partial_t + M^{\ab}_i \big) \widetilde{T}^{\ab,i}_t {f}(x)
, \quad \quad t>0, \quad x \in (-1,1)^d,
\]
\[
\big(\partial^2_t - \J^{\ab} \big) \P^{\ab}_t f(x)
\; = \; 0 \; = \;
\big(\partial^2_t - M^{\ab}_i \big) \widetilde{\P}^{\ab,i}_t {f}(x),
\quad \quad t >0, \quad x \in (-1,1)^d.
\]
\end{prop}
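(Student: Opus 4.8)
The plan is to prove all four assertions by a single device: showing that each defining series may be differentiated term by term, arbitrarily often, in both $t$ and $x$, locally uniformly on $(0,\infty)\times(-1,1)^d$. Once this is granted, the $C^{\infty}$ regularity is immediate, and each equation reduces to the corresponding eigenfunction identity applied termwise. Indeed, since $\jac^{\ab}_k$ satisfies $\J^{\ab}\jac^{\ab}_k=\lambda_k\jac^{\ab}_k$ with $\J^{\ab}$ read as the differential operator, one has
\begin{align*}
\big(\partial_t + \J^{\ab}\big)\big(\e^{-t\lambda_k}\jac^{\ab}_k\big)
   &= (-\lambda_k+\lambda_k)\,\e^{-t\lambda_k}\jac^{\ab}_k = 0,\\
\big(\partial_t^2 - \J^{\ab}\big)\big(\e^{-t\lambda_k^{1\slash 2}}\jac^{\ab}_k\big)
   &= (\lambda_k-\lambda_k)\,\e^{-t\lambda_k^{1\slash 2}}\jac^{\ab}_k = 0,
\end{align*}
so that the series \eqref{def_heat_ser} and the analogous Poisson series inherit these relations. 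For the modified operators the computation is verbatim, with $\jac^{\ab}_k$ replaced by $\sqr_i\jac^{(\alpha+e_i,\beta+e_i)}_k$ and $\lambda_k$ by $\lambda_{k+e_i}$, the eigenfunction property now supplied by Lemma \ref{lem_intr}.

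It thus remains to justify the term-by-term differentiation, and the key is to dominate each differentiated term by an absolutely summable majorant, uniform on $[t_0,\infty)\times(-1,1)^d$ for every fixed $t_0>0$. First I would recall (as already observed after \eqref{jac_est}) that for $f\in L^1(d\m_{\ab})$ the coefficients $a_k(f)$, and likewise the $a^i_k(f)$, grow at most polynomially in $k$; for the latter one uses that $\sqr_i^2$ converts the weight $d\m_{\ab}$ into $d\m_{(\alpha+e_i,\beta+e_i)}$ in the $i$th variable, so that $\|\sqr_i\jac^{(\alpha+e_i,\beta+e_i)}_k\|^2_{2,\ab}=\|\jac^{(\alpha+e_i,\beta+e_i)}_k\|^2_{2,(\alpha+e_i,\beta+e_i)}$, whose reciprocal is polynomial in $k$ by \eqref{L2normest}. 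Applying $\partial_t^m$ brings down a factor $\lambda_k^m$ (or $\lambda_k^{m\slash 2}$ in the Poisson case), which is polynomial in $k$, while $\partial_x^\gamma$ acting on a Jacobi polynomial yields, by iterating \eqref{diff_jac_pol}, a polynomial-in-$k$ multiple of a shifted Jacobi polynomial, bounded uniformly in $x$ by \eqref{jac_est}. Hence every term of every differentiated series is dominated by a polynomial in $k$ times $\e^{-t_0\lambda_k}$, respectively $\e^{-t_0\lambda_k^{1\slash 2}}$. Since $\lambda_k$ grows quadratically in $|k|$, the exponential factor defeats any polynomial, the majorant series converges, and the locally uniform convergence of all derivative series follows, giving both the smoothness and the legitimacy of termwise differentiation.

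For the modified series \eqref{series_def_mod_T} and \eqref{def_mod_poisson} I would argue identically, the only new point being that the factor $\sqr_i(x)$ must be differentiated as well; since $\sqr_i$ and all its derivatives are smooth and bounded on compact subsets of $(-1,1)^d$, the Leibniz rule contributes only harmless bounded factors and the same exponential domination applies. Here one genuinely works on compact subsets, because $\sqr_i$ and its derivatives blow up as $x_i\to\pm1$. The main obstacle is precisely this bookkeeping of the spatial derivatives: one must check that differentiating a Jacobi polynomial keeps its $k$-growth polynomial, which is exactly \eqref{diff_jac_pol} combined with \eqref{jac_est}, and that the boundary-singular coefficients of $\J^{\ab}$, and of $M^{\ab}_i$ with its terms $(\alpha_i+1\slash2)\slash(1-x_i)$ and $(\beta_i+1\slash2)\slash(1+x_i)$, cause no difficulty in the interior, being bounded on the compacta on which the equations are asserted. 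With these points settled the proposition follows.
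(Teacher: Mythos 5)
Your proposal is correct and follows essentially the same route as the paper: polynomial growth of the coefficients $a_k(f)$, $a^i_k(f)$ (via \eqref{jac_est} and \eqref{L2normest}) and of the sup-norms of the differentiated eigenfunctions (via iterated \eqref{diff_jac_pol}), beaten by the factors $\e^{-t\lambda_k}$ or $\e^{-t\lambda_k^{1\slash 2}}$, justifies termwise differentiation locally uniformly, after which the equations hold term by term. The paper carries this out only for $\widetilde{T}^{\ab,i}_t f$ and declares the other three cases analogous, and your observations about the $\sqr_i$ factor forcing compact subsets and the eigenfunction identities from Lemma \ref{lem_intr} match its argument exactly.
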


\begin{proof}
We consider only $\widetilde{T}^{\ab,i}_t f(x)$, given by the series in (\ref{series_def_mod_T}),
since the treatment of the remaining functions is similar. 
Observe that by \eqref{jac_est} and \eqref{L2normest} the coefficients
$|a^i_k(f)|$ grow at most polynomially in $k.$ Furthermore, in view of \eqref{jac_est},
the quantity 
$$
\sup_{x \in (-1,1)^d} \big|\sqr_i(x)\jac_k^{(\alpha+e_i,\beta+e_i)}(x)\big|
$$
also has polynomial growth in $k$. Thus the series \eqref{series_def_mod_T}
may be differentiated term by term with respect to $t$, repeatedly. The result is
\begin{equation} \label{diff_res}
  \partial_t^m \widetilde{T}^{\ab,i}_t f(x) =
    \sum_{k \in \Nset^d} a_k^i(f) (-1)^m \lambda_{k+e_i}^m
    {\e}^{-t\lambda_{k+e_i}} \sqr_i(x) \jac^{(\alpha+e_i,\beta+e_i)}_{k}(x),
\end{equation}
and the right-hand side is continuous since the series converges uniformly in $(t,x)$
on compact subsets of $(0,\infty)\times (-1,1)^d.$
Using (\ref{diff_jac_pol}) we see that, for a fixed compact set $K \subset (-1,1)^d$, also
$$
\sup_{x \in K} \big|\partial_{x_j} \big(\sqr_i(x)\jac_k^{(\alpha+e_i,\beta+e_i)}(x)\big)\big|
$$
grows in $k$ not faster than polynomially.
Hence, we may differentiate the series in (\ref{diff_res}) with respect to $x_j$ term by term,
the result being a continuous function since the convergence is again locally uniform.
The same arguments apply to higher derivatives, so
$\widetilde{T}^{\ab,i}_t f(x)$ is smooth on $(0,\infty) \times (-1,1)^d.$ The corresponding heat
equation is easily verified by means of the differentiated series.
\end{proof}

\begin{lem} \label{contr}
Fix $i \in \{1,\ldots,d\}.$ If $\alpha_i \ge -1\slash 2$ and $\beta_i \ge -1\slash 2$ then
\begin{equation} \label{G_ineq}
\widetilde{G}_t^{\ab,i}(x,y) \le G_t^{\ab}(x,y), \qquad x,y \in (-1,1)^d, \quad t>0.
\end{equation}
\end{lem}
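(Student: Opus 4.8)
The plan is to exhibit $M^{\ab}_i$ as a \emph{nonnegative potential perturbation} of $\J^{\ab}$ and then compare the two heat kernels by a parabolic maximum principle. The algebraic input is the identity $M^{\ab}_i = \J^{\ab} + [\delta_i,\delta^{*}_i]$, with
\[
[\delta_i,\delta^{*}_i] = \frac{\alpha_i+1\slash 2}{1-x_i} + \frac{\beta_i+1\slash 2}{1+x_i}.
\]
Under the hypothesis $\alpha_i,\beta_i\ge -1\slash 2$ both numerators are nonnegative and both denominators are positive on $(-1,1)^d$, so the potential $V_i:=[\delta_i,\delta^{*}_i]$ is nonnegative; this is the one and only place the parameter restriction is used. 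I also record that $\widetilde G^{\ab,i}_t\ge 0$: by the product formula displayed just before the statement, $\widetilde G^{\ab,i}_t(x,y)=\e^{-t(\alpha_i+\beta_i+2)}\sqr_i(x)\sqr_i(y)\,G^{(\alpha+e_i,\beta+e_i)}_t(x,y)$, where $\sqr\ge 0$ on $(-1,1)$ and the shifted Jacobi heat kernel is strictly positive.

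Because $V_i$ depends only on $x_i$ and every kernel factors over the coordinates, write $G^{\ab}_t(x,y)=\prod_{j=1}^d G^{(\alpha_j,\beta_j)}_t(x_j,y_j)$ in terms of the one-dimensional Jacobi heat kernels. Inserting this and the product formula for $\widetilde G^{\ab,i}_t$ into \eqref{G_ineq} and cancelling the common strictly positive factors $\prod_{j\neq i}G^{(\alpha_j,\beta_j)}_t(x_j,y_j)$, the inequality reduces to the one-dimensional statement
\[
\e^{-t(\alpha_i+\beta_i+2)}\sqr(x_i)\sqr(y_i)\,G^{(\alpha_i+1,\beta_i+1)}_t(x_i,y_i)\le G^{(\alpha_i,\beta_i)}_t(x_i,y_i).
\]
Hence I may take $d=1$ and $i=1$, so that $M^{\ab}_1=\J^{\ab}+V$ with $V\ge 0$. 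Rather than compare kernels directly, whose initial data are singular, I test against a fixed $0\le f\in C^{\infty}_c((-1,1))$ and set $u=T^{\ab}_t f$, $w=\widetilde T^{\ab,1}_t f$; by Proposition \ref{diff_equations} both are smooth on $(0,\infty)\times(-1,1)$, and since such $f$ determine the sign of the continuous kernel difference it suffices to prove $w\le u$ for every such $f$.

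The comparison rests on the identity $(\partial_t+\J^{\ab})g=Vw$ for $g:=u-w$, which follows from the two heat equations of Proposition \ref{diff_equations} together with $M^{\ab}_1=\J^{\ab}+V$. As $V\ge 0$ and $w\ge 0$, the function $g$ is a supersolution, $(\partial_t+\J^{\ab})g\ge 0$. I then read off the parabolic-boundary data. Rewriting the kernel via $d\m_{(\alpha+1,\beta+1)}=\sqr^2\,d\m_{\ab}$ gives $\widetilde T^{\ab,1}_t f = \e^{-t(\alpha+\beta+2)}\,\sqr\,T^{(\alpha+1,\beta+1)}_t(f\slash\sqr)$, where $f\slash\sqr\in C_c((-1,1))$; since $T^{(\alpha+1,\beta+1)}_t$ is an approximate identity on continuous functions (cf. \eqref{max_conv} and the ensuing uniform convergence), $w\to f$ and hence $g\to 0$ uniformly as $t\to 0^{+}$. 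The same formula yields $0\le w\le \sqr\,\|f\slash\sqr\|_{\infty}\to 0$ as $x\to\pm 1$, while $u\ge 0$, so $g\ge -w$ has nonnegative lateral boundary values in the limit. Exhausting $(-1,1)$ by $[-1+\eta,1-\eta]$, applying the classical weak maximum principle for the there uniformly parabolic operator $\partial_t+\J^{\ab}$ on $[\epsilon,T]\times[-1+\eta,1-\eta]$, and letting $\epsilon,\eta\to 0$, I conclude $g\ge 0$, i.e.\ $w\le u$. Unwinding the reduction gives \eqref{G_ineq}.

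The main obstacle is to run the maximum principle rigorously on the \emph{degenerate} domain: $\J^{\ab}$ loses ellipticity at $x=\pm 1$ and the potential $V$ blows up there, so one cannot simply invoke a maximum principle on all of $(-1,1)$. What saves the argument is that the weight $\sqr$ vanishes on the boundary, forcing $w\to 0$ — and hence the lateral boundary values of $g$ to be nonnegative — precisely on the faces where the diffusion degenerates; this is also why reducing to one dimension first is convenient, since for $d\ge 2$ the faces $x_j=\pm 1$ with $j\neq i$ carry no such vanishing. The remaining care is to make the double limit $\epsilon,\eta\to 0$ uniform, for which one uses the boundedness of $u$ and $w$ on $(0,T]\times(-1,1)$ (immediate from the $L^{\infty}$-contractivity of $T^{\ab}_t$ and $T^{(\alpha+1,\beta+1)}_t$) and the uniform initial convergence $w\to f$. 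I note that the integrated form of this comparison is the Duhamel identity $\widetilde T^{\ab,1}_t=T^{\ab}_t-\int_0^t T^{\ab}_{t-s}\,V\,\widetilde T^{\ab,1}_s\,ds$, whose positivity would give \eqref{G_ineq} at once; the maximum-principle route above is the cleaner way to sidestep justifying that identity in the presence of the singular potential.
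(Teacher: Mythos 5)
Your proof is correct, and although it shares the paper's overall strategy --- reduction to one dimension via the product structure of the kernels, then a parabolic comparison in which the hypothesis $\alpha_i,\beta_i\ge -1\slash 2$ enters only as a sign condition --- the comparison itself is executed along a genuinely different route. The paper never passes to compact subdomains: it compares $u(t,x)=\Phi(x)\,T_t^{(\alpha+1,\beta+1)}(f\slash\Phi)(x)$ against the perturbed majorant $v(t,x)=e^{t\eta}e^{t(\alpha+\beta+2)}T_t^{\ab}(f+\eta)(x)$ with $\eta>0$, and runs a first-contact argument: if $T<\infty$ is the first time the graphs touch, then at a contact point $(T,x_0)$ one has $\partial_t(v-u)=\Phi(x_0)^2\partial_x^2(v-u)+\Phi(x_0)^{-2}\big[\alpha+\beta+1+(\alpha-\beta)x_0\big]u+\eta u>0$, a contradiction. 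Note that the coefficient $\Phi(x_0)^{-2}[\alpha+\beta+1+(\alpha-\beta)x_0]$ is precisely your commutator potential $V(x_0)=[\delta_1,\delta_1^{*}](x_0)$, so both proofs consume the restriction $\alpha,\beta\ge-1\slash2$ at the identical spot; moreover the paper's $\eta$-perturbation plays, in one stroke, the roles that you distribute between the bottom and the lateral boundary, since $v>\eta$ everywhere while $u<C\Phi$ excludes contact near the degenerate faces $x=\pm1$ --- the same mechanism (vanishing of $\Phi$ at the boundary) behind your estimate $0\le w\le\Phi\,\|f\slash\Phi\|_{\infty}$. Your version instead externalizes the contradiction step to the classical weak maximum principle on the uniformly parabolic rectangles $[\epsilon,T]\times[-1+\eta,1-\eta]$, at the cost of the $\epsilon,\eta$ bookkeeping, which you close correctly: the lateral error is $O(\sqrt{\eta})$ uniformly in $t$, and the bottom error vanishes by the uniform convergence $u,w\to f$ (available because $f\slash\Phi$ extends continuously to $[-1,1]$). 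What your route buys is structural transparency and generality: the lemma becomes the domination of the semigroup of $\J^{\ab}+V$, $V\ge0$, by that of $\J^{\ab}$ --- exactly the positivity encoded in your closing Duhamel identity, which you rightly avoid justifying directly --- and the argument would work verbatim for any nonnegative potential perturbation. What the paper's route buys is a self-contained argument that invokes no maximum-principle theorem and no exhaustion, the single perturbation $\eta$ supplying both strictness at $t=0$ and control at the degenerate boundary simultaneously.
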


\begin{proof}
Recall that we have $\widetilde{G}_t^{\ab,i}(x,y) =
{\e}^{-t(\alpha_i+\beta_i+2)} \sqr_i(x)\sqr_i(y) G_t^{(\alpha+e_i,\beta+e_i)}(x,y).$
Observe that due to the product structure of the kernels involved, it suffices to prove
the lemma in the one-dimensional case.
Then it is enough to show that for any nonnegative $f \in C^{\infty}_c((-1,1))$
which is not identically equal to $0$ one has
$$
\sqr(x) T_t^{(\alpha+1,\beta+1)}( f\slash \sqr)(x) \le
    {\e}^{t(\alpha+\beta+2)} T_t^{\ab}f(x), \qquad t>0, \quad x \in (-1,1).
$$
Denote by $u=u(t,x)$ the left-hand side of the above inequality
and let
$$
v=v(t,x)=e^{t\eta} e^{t(\alpha+\beta+2)}T^{\ab}_t(f+\eta)(x)
$$
for some fixed $\eta >0.$
Since $f$ is smooth, both the functions $u$ and $v$ have continuous
extensions to $[0,\infty) \times (-1,1).$ Our task will be done once we show that
\begin{equation} \label{rel_uv}
u(t,x) \le v(t,x), \qquad x \in (-1,1)
\end{equation}
for all $t \ge 0.$ Let
$$
T = \sup \big\{ t' \ge 0 : u(t,x) \le v(t,x) \; \textrm{for} \; (t,x) \in
[0,t') \times (-1,1) \big\}.
$$
Clearly $u(0,x) < v(0,x)$ for $x \in (-1,1)$.
Moreover, $u(t,x) < v(t,x)$ for all $t\ge 0$ provided that $|x|$ is
sufficiently close to $1$; this is because $u(t,x) < C\Phi(x)$ and $v(t,x)>\eta$ for
$t \ge 0$, $x\in (-1,1)$.
Hence for $t$ small enough $u(t,x) < v(t,x), \; x \in (-1,1),$ which means that $T>0.$

Suppose that $T$ is finite. We shall then derive a contradiction which will end the reasoning.
Observe that $u(T,x) \le v(T,x)$ for all $x \in (-1,1)$
and $u(T,x_0) = v(T,x_0)$ for some $x_0$.
We claim that
\begin{equation} \label{d_in}
\partial_t \big(v(t,x)-u(t,x)\big)\big|_{(t,x)=(T,x_0)} > 0.
\end{equation}
This would imply that $v(t,x_0)-u(t,x_0) < 0$ for $t$ slightly less than $T$,
a contradiction.

To prove the claim, we compute the derivative in \eqref{d_in}. With the aid of the heat
equation (see Proposition \ref{diff_equations}), we get
$$
\partial_t \big(v(t,x)-u(t,x)\big) = (\alpha + \beta +2 + \eta)
    v(t,x) - \J^{\ab} v(t,x) +
    \sqr(x) \J^{(\alpha+1,\beta+1)} \big( {u(t,x)}\slash {\sqr(x)} \big).
$$
Then using the definition of $\J^{\ab}$ and the fact that $v-u = \partial_x(v-u)=0$ at 
the point $(T,x_0)$, we find that the left-hand side in \eqref{d_in} is equal to
$$
\sqr(x_0)^2\partial_x^2(v-u)(T,x_0) +
    \sqr(x_0)^{-2} \big[ \alpha + \beta + 1 + (\alpha-\beta) x_0 \big] u(T,x_0)
    + \eta u(T,x_0).
$$
The first term above is nonnegative, since the function
$x \mapsto v(T,x)-u(T,x)$ has a local minimum at $x=x_0$.
The factor in square brackets is obviously not smaller than $\alpha+\beta+1-|\alpha-\beta|$,
an expression which equals either $2\alpha+1$ or $2\beta+1$ and is nonnegative by the assumption
$\alpha,\beta \ge -1\slash 2$.
Finally, $u(T,x_0)$ is strictly positive by the corresponding property of
the kernel involved. The claim follows.
\end{proof}
An immediate consequence of Lemma \ref{contr} and \eqref{sub_tild} is the following
\begin{cor} \label{cor_1}
Let $i \in \{1,\ldots,d\}$ and assume that $\alpha_i,\beta_i \in [-1\slash 2, \infty).$ 
Then for each function $f \ge 0,$ we have
\[
\widetilde{\P}_t^{\ab,i}f(x) \le \P^{\ab}_t f(x), \quad\quad x \in (-1,1)^d,
\quad t >0,
\]
and $\{\widetilde{\P}^{\ab,i}_t\}$ is a semigroup of
contractions in $L^p(d\m_{\ab}), \; 1 \le p \le \infty.$
\end{cor}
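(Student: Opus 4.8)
The plan is to deduce the corollary entirely from the kernel domination \eqref{G_ineq} and the subordination formula \eqref{sub_tild}, so that no new hard estimate is required. First I would transfer \eqref{G_ineq} from kernels to heat semigroups. Since $\widetilde{T}^{\ab,i}_t$ and $T^{\ab}_t$ act through the kernels $\widetilde{G}^{\ab,i}_t$ and $G^{\ab}_t$ by the integral representations \eqref{mod_sem_heat_def} and \eqref{def_heat}, integrating the pointwise bound $\widetilde{G}^{\ab,i}_t(x,y) \le G^{\ab}_t(x,y)$ against an arbitrary $f \ge 0$ gives
\[
\widetilde{T}^{\ab,i}_t f(x) \le T^{\ab}_t f(x), \qquad x \in (-1,1)^d, \quad t>0.
\]
Feeding this into \eqref{sub_tild} and into the corresponding subordination formula for $\P^{\ab}_t$, and using that the subordinating weight $e^{-u}\slash\sqrt{u}$ is nonnegative so the inequality persists under integration in $u$, I obtain $\widetilde{\P}^{\ab,i}_t f(x) \le \P^{\ab}_t f(x)$ for all $f \ge 0$. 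This is the first assertion of the corollary.

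For the contraction property I would first record that $\widetilde{\P}^{\ab,i}_t$ is positivity preserving. This is because the modified heat kernel factors as $\widetilde{G}^{\ab,i}_t(x,y) = e^{-t(\alpha_i+\beta_i+2)} \sqr_i(x)\sqr_i(y)\, G^{(\alpha+e_i,\beta+e_i)}_t(x,y)$, which is nonnegative since $\sqr_i > 0$ on $(-1,1)^d$ and the Jacobi heat kernel is strictly positive by \cite{KM}; positivity then passes to $\widetilde{\P}^{\ab,i}_t$ through \eqref{sub_tild}. Consequently, for arbitrary $f$ the pointwise chain
\[
\big| \widetilde{\P}^{\ab,i}_t f(x) \big| \le \widetilde{\P}^{\ab,i}_t |f|(x) \le \P^{\ab}_t |f|(x)
\]
holds, the last step being the domination just proved applied to $|f| \ge 0$. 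Since $\P^{\ab}_t$ is itself an $L^p$ contraction --- which follows from the contractivity of $T^{\ab}_t$ via subordination and Minkowski's integral inequality, using $\frac{1}{\sqrt{\pi}}\int_0^\infty e^{-u}u^{-1\slash 2}\, du = 1$ --- taking $L^p(d\m_{\ab})$ norms yields $\|\widetilde{\P}^{\ab,i}_t f\|_{p,\ab} \le \|f\|_{p,\ab}$ for $1 \le p \le \infty$. The semigroup identity $\widetilde{\P}^{\ab,i}_s \widetilde{\P}^{\ab,i}_t = \widetilde{\P}^{\ab,i}_{s+t}$ is immediate from the spectral definition \eqref{def_mod_poisson}, as the factors $e^{-s\lambda_{k+e_i}^{1\slash 2}}$ compose correctly on each eigenfunction.

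No step here presents a serious difficulty, since the substantial work was already carried out in Lemma \ref{contr}. The only point I would be careful about is the positivity preservation of $\widetilde{\P}^{\ab,i}_t$, which is what licenses the passage from nonnegative $f$ to sign-changing $f$ in the contraction estimate; everything else is a routine transfer of the established kernel inequality through the nonnegative subordinating measure, together with a standard use of the contractivity of the unmodified Poisson semigroup $\P^{\ab}_t$.
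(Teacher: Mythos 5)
Your proof is correct and follows essentially the same route as the paper, which presents Corollary \ref{cor_1} as an immediate consequence of the kernel inequality of Lemma \ref{contr} and the subordination formula \eqref{sub_tild}; your write-up merely fills in the routine details left implicit there (transfer of the kernel bound to the heat semigroups, positivity of $\widetilde{G}^{\ab,i}_t$ via the Karlin--McGregor positivity of $G^{(\alpha+e_i,\beta+e_i)}_t$, the majorization $|\widetilde{\P}^{\ab,i}_t f| \le \P^{\ab}_t |f|$, and the contractivity of $\P^{\ab}_t$ inherited from $T^{\ab}_t$ by subordination). Nothing further is needed.
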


\begin{rem} \label{rem_low}
When $\alpha_i < -1\slash 2$ or $\beta_i < -1\slash 2$, the inequality of Lemma
\ref{contr} does not hold. This is justified as follows. As in the proof of
Lemma \ref{contr}, it suffices to consider the one-dimensional case.
Take a function $f \in C_c^{\infty}((-1,1))$ such that
$0 \le f(x) \le 1$ for $x \in (-1,1)$ and $f(x)=1$ for $x \in [-1+\varepsilon,1-\varepsilon]$;
here $0 < \varepsilon <1$ will be fixed in a moment. Consider the functions
\begin{align*}
u(t,x) & = \sqr(x)\, T_t^{(\alpha+1,\beta+1)} ( {f}\slash{\sqr})(x), \\
v(t,x) & = \e^{t(\alpha + \beta + 2)} T^{\ab}_t \boldsymbol{1}(x) = e^{t(\alpha + \beta + 2)}.
\end{align*}
Clearly, both $u$ and $v$ are continuous on $[0,\infty)\times (-1,1)$ and, moreover,
$u(0,x) = v(0,x) = 1$ for $x \in [-1+\varepsilon,1-\varepsilon]$.
The derivative $\partial_t v$ is again a continuous function of $(t,x) \in [0,\infty)
\times (-1,1)$, and the same is true for $\partial_t u$ since
$$
\partial_t u(t,x) = -\sqr(x)\, T_t^{(\alpha+1,\beta+1)}
    \big( \J^{(\alpha+1,\beta+1)} (f \slash \sqr) \big)(x),
$$
by the heat equation and the fact that $T_t^{(\alpha+1,\beta+1)}$ commutes with
$\J^{(\alpha+1,\beta+1)}$ on its domain.
Thus we have
$$
\partial_t (v-u)(0,x) = \alpha + \beta + 2 + \sqr(x)\J^{(\alpha+1,\beta+1)}
\big( f\slash \sqr \big)(x).
$$
One computes that for $x \in [-1+\varepsilon,1-\varepsilon]$ the last expression is
equal to
$$
\frac{\alpha+1\slash 2}{1-x} + \frac{\beta + 1\slash 2}{1+x}.
$$
Now we fix $\varepsilon > 0$ such that this function of $x$ is strictly negative on a closed interval
$\mathcal{D} \subset [-1+\varepsilon,1-\varepsilon]$ of nonzero length (this is possible since
$\min\{\alpha,\beta\} < -1\slash 2$). It follows that $u(t,x) > v(t,x)$ for $x \in \mathcal{D}$
and small $t>0.$ Hence \eqref{G_ineq} cannot hold, which ends Remark \ref{rem_low}.
\end{rem}

An important conclusion of the above reasoning is that if $\alpha_i < -1\slash 2$ or
$\beta_i < -1/2$ for some $i$, then $\widetilde{T}^{\ab,i}_t$ are not contractions
on $L^{\infty}((-1,1)^d)$ for small $t>0.$ Since $\|\cdot\|_{p,\ab} \to \|\cdot\|_{\infty}$
as $p \to \infty$, we see that $\widetilde{T}^{\ab,i}_t$ are not contractions on $L^p(d\m_{\ab})$
for $t$ sufficiently small and $p$ large enough, whenever $\min \{\alpha_i,\beta_i\} < -1\slash 2$.
A similar behavior should be expected for
$\{\widetilde{\P}^{\ab,i}_t\}$, but this seems to require a distinct detailed analysis.

In what follows, we use the convention that constants may change their value
(but not the dependence)
from one occurrence to the next. The notation $c_p$ means that the constant depends
\emph{only} on $p$ (in particular, $c_p$ is independent of the dimension $d$ and
the type multi-indices $\alpha,\beta$). Constants are always strictly positive and finite.


\section{Square functions}
\label{sec:square_functions}

Define the joint Jacobi gradient
$$
\nabla_{\ab} = \big(\partial_t, \grad_{\ab}\big).
$$
We consider the following Littlewood-Paley-Stein type square functions:
\begin{align*}
g(f)(x) & = \bigg(\int^{\infty}_0 t \big|\nabla_{\ab} \P^{\ab}_t f(x)\big|^2 \, dt
 \bigg)^{1\slash 2}, \\
\widetilde{g}_i(f)(x) & = \bigg( \int_0^{\infty} t
\big|\partial_t \widetilde{\P}^{\ab,i}_t f(x)\big|^2 \, dt \bigg)^{1\slash 2}, 
\quad \quad i =1,\ldots , d.
\end{align*}
The main result of this section reads as follows.
\begin{thm} \label{main_theorem}
Let $1<p<\infty$ and $\alpha, \beta \in [-1\slash 2,\infty)^d.$
Then
\begin{itemize}
\item[(a)] for all $f \in L^p(d\m_{\ab})$,
\begin{equation*}
\|g(f)\|_{p,\ab} \le c_p  \|f\|_{p,\ab};
\end{equation*}
\item[(b)] given $i \in \{1,\ldots,d\}$,
\[
c_p^{-1} \|f\|_{p,\ab} \le
\|\widetilde{g}_i (f)\|_{p,\ab} \le c_p \|f\|_{p,\ab}
\]
for all $f \in L^p(d\m_{\ab}).$
\end{itemize}
\end{thm}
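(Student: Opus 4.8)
\emph{Reductions and the key subharmonicity identity.} The plan is to derive both parts from Littlewood--Paley--Stein theory, taking the upper bounds from the general theory for symmetric semigroups and the lower bound in (b) from a polarization identity. By density of the Jacobi polynomials in $L^p(d\m_{\ab})$, $1<p<\infty$, together with Fatou's lemma, it is enough to argue for $f$ a finite linear combination of Jacobi polynomials. For such $f$ the function $u(t,x)=\P^{\ab}_t f(x)$ is a finite sum, smooth on $[0,\infty)\times[-1,1]^d$, and by Proposition \ref{diff_equations} it solves $\partial_t^2 u=\J^{\ab}u$. The decisive pointwise fact is that $u^2$ is subharmonic for the space--time operator $\partial_t^2-\J^{\ab}$. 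Indeed, writing $\J^{\ab}=\sum_i\delta_i^{*}\delta_i$ and using the product rule for $\delta_i=\sqr_i\partial_{x_i}$ one gets the \emph{carr\'e du champ} identity
\[
\J^{\ab}(u^2)=2u\,\J^{\ab}u-2\sum_{i=1}^d(\delta_i u)^2=2u\,\J^{\ab}u-2|\grad_{\ab}u|^2,
\]
and combining it with $\partial_t^2u=\J^{\ab}u$ yields
\[
\big(\partial_t^2-\J^{\ab}\big)(u^2)=2(\partial_t u)^2+2|\grad_{\ab}u|^2=2\,|\nabla_{\ab}u|^2\ge0 .
\]
In particular $g(f)(x)^2=\tfrac12\int_0^\infty t\,\big(\partial_t^2-\J^{\ab}\big)(u^2)(t,x)\,dt$.

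\emph{Proof of (a).} At $p=2$ the identity integrates explicitly: since $\int|\grad_{\ab}u|^2\,d\m_{\ab}=\langle\J^{\ab}u,u\rangle_{\ab}$ (the boundary terms vanish because the weight in the divergence form \eqref{divergent} degenerates at $x_i=\pm1$), the spectral decomposition and $\int_0^\infty t\,\lambda_k e^{-2t\sqrt{\lambda_k}}\,dt=\tfrac14$ give $\|g(f)\|_{2,\ab}^2\le\tfrac12\|f\|_{2,\ab}^2$. For general $1<p<\infty$ I would invoke the Littlewood--Paley--Stein theory of \cite{St}: the semigroup $T^{\ab}_t$ is a symmetric diffusion semigroup (Section \ref{sec:preliminaries}; in particular $T^{\ab}_t\mathbf 1=\mathbf 1$, it is positivity preserving and $L^p$-contractive, and the maximal estimate \eqref{max_ineq} holds), and the subharmonicity just established is exactly the structural input needed to run Stein's argument for the $g$-function built from the \emph{full} space--time gradient $\nabla_{\ab}$. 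For $p\ge2$ this proceeds by a space--time Green's formula against the Poisson extension $\P^{\ab}_t\phi$ of a nonnegative dual function $\phi$, subharmonicity converting the energy into a boundary term that is dominated by $\|f\|_{p,\ab}$ and the maximal operator $\P^{\ab}_{*}$; the range $1<p\le2$ follows from Stein's complementary argument. All constants arising here depend only on $p$, hence are independent of $d$ and of $\alpha,\beta$.

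\emph{Upper bound in (b).} By Lemma \ref{lem_intr} each $\widetilde{\P}^{\ab,i}_t=\exp(-t(M^{\ab}_i)^{1\slash2})$ is self-adjoint on $L^2(d\m_{\ab})$, and by Corollary \ref{cor_1} it is positivity preserving and contractive on every $L^p(d\m_{\ab})$, with maximal operator dominated pointwise by $\P^{\ab}_{*}$ and therefore bounded on $L^p$. Although $\widetilde{\P}^{\ab,i}_t\mathbf 1\neq\mathbf 1$, these properties (symmetry, positivity, $L^p$-contractivity and the maximal bound) are all that is needed for the upper half of Stein's $g$-function estimate, which gives $\|\widetilde{g}_i(f)\|_{p,\ab}\le c_p\|f\|_{p,\ab}$ with $c_p$ depending only on $p$.

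\emph{Lower bound in (b) and the main obstacle.} The reverse inequality follows by duality from a polarization identity. The spectrum of $M^{\ab}_i$ is $\{\lambda_{k+e_i}:k\in\N^d\}$ and $\lambda_{e_i}=\alpha_i+\beta_i+2>0$, so $M^{\ab}_i$ has trivial kernel and no boundary term survives at $t=\infty$; the spectral calculus then gives, for $f,h$ finite combinations of the eigenfunctions $\sqr_i\jac^{(\alpha+e_i,\beta+e_i)}_k$,
\[
\langle f,h\rangle_{\ab}=4\int_0^\infty t\,\big\langle\partial_t\widetilde{\P}^{\ab,i}_t f,\partial_t\widetilde{\P}^{\ab,i}_t h\big\rangle_{\ab}\,dt .
\]
Cauchy--Schwarz in $(t,x)$ followed by H\"older yields $|\langle f,h\rangle_{\ab}|\le4\,\|\widetilde{g}_i(f)\|_{p,\ab}\|\widetilde{g}_i(h)\|_{p',\ab}$, and applying the upper bound to $h$ in $L^{p'}$ and taking the supremum over $\|h\|_{p',\ab}\le1$ gives $\|f\|_{p,\ab}\le c_p\|\widetilde{g}_i(f)\|_{p,\ab}$; a density argument extends this to all of $L^p(d\m_{\ab})$. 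I expect the main obstacle to lie entirely in part (a): turning the subharmonicity into the full-gradient upper bound rigorously---the space--time integration by parts, the control of the boundary term by the maximal function, and the passage to $1<p\le2$---is the technically heaviest step, while (b) is comparatively routine once the contractivity of the modified semigroups from Corollary \ref{cor_1} is available.
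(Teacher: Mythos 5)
Your part (b) is essentially the paper's proof, and correctly executed: the upper bound follows from the refinement of Stein's Littlewood--Paley theory that dispenses with the Markov condition $T_t\mathbf 1=\mathbf 1$ (which indeed fails for $\widetilde{\P}^{\ab,i}_t$) --- the paper cites Coifman--Rochberg--Weiss \cite{CRW} and Meda \cite{M} for exactly this, so you should invoke those rather than \cite{St} itself --- and your polarization/duality derivation of the lower bound, legitimate because $M^{\ab}_i$ has trivial kernel ($\lambda_{e_i}=\alpha_i+\beta_i+2>0$), is precisely the standard argument behind that citation. Your carr\'e du champ identity and the $p=2$ case of (a) are also correct.

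The genuine gap is in (a) for $p>2$, and you have misidentified the main obstacle. The $g$-function here contains the horizontal derivatives $\delta_j$, and no general semigroup theorem in \cite{St} covers a full-gradient $g$-function; the classical space--time Green's formula argument you sketch (``against the Poisson extension of a dual function'') silently relies on the gradient commuting with the Poisson semigroup. In the Jacobi setting $\delta_j$ does \emph{not} commute with $\P^{\ab}_t$, so the pivotal step --- bounding $\big|\nabla_{\ab}\P^{\ab}_t f\big|^2$ by $\P^{\ab}_{t\slash 2}$ applied to $\big|\nabla_{\ab}\P^{\ab}_{t\slash 2}f\big|^2$, which is what lets one move the semigroup onto $h$ by self-adjointness and produce the extension $\P^{\ab}_t h$ --- cannot be performed directly. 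The paper gets around this via the intertwining $\delta_j\P^{\ab}_t f=\widetilde{\P}^{\ab,j}_t(\delta_j f)$, see \eqref{ex_ord}, combined with the pointwise kernel domination $\widetilde{G}^{\ab,j}_t\le G^{\ab}_t$ of Lemma \ref{contr} (proved by a parabolic maximum principle), which gives $\big(\widetilde{\P}^{\ab,j}_{t\slash2}v\big)^2\le\P^{\ab}_{t\slash2}(v^2)$; this is the \emph{only} place in part (a) where the hypothesis $\alpha,\beta\in[-1\slash2,\infty)^d$ enters, and Remark \ref{rem_low} shows the domination fails outside that range. Your sketch of (a) never uses the modified semigroups or this hypothesis at all, which is a structural sign it cannot close as written. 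Two secondary points: for $1<p\le2$ the argument exploits $\JE^{\ab}(u^p)=p(p-1)u^{p-2}|\nabla_{\ab}u|^2$ with $u=\P^{\ab}_t f>0$, so the reduction must be to \emph{nonnegative} functions (the paper uses nonnegative $C^2_c$ functions), not to arbitrary signed polynomial combinations; and even for $p\ge2$ the scheme runs first for $p\ge4$ (since the dual exponent of $p\slash2$ must fall in $(1,2]$ to apply the already-proved case to $g(h)$), with $2<p<4$ recovered by Marcinkiewicz interpolation --- a bootstrapping detail absent from your outline, which moreover requires the a priori finiteness of $\|g(f)\|_{p,\ab}$ (Proposition \ref{prop_g}) to absorb the $\|g(f)\|_{p,\ab}\|f\|_{p,\ab}$ term.
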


The case when some $\alpha_{i}$ or $\beta_{i}$ is not in $[-1/2, \infty)$ is not
covered by our results and seems to require a more subtle treatment.
The reason for this is that the inequality between the kernels \eqref{G_ineq} holds only
when $\alpha_i,\beta_i \ge -1\slash 2$, see Remark \ref{rem_low}.
Without this relation it is harder to compare
$\widetilde{S}_{t}^{(\alpha,\beta),i}$ with ${S_{t}^{(\alpha,\beta)}}$
for nonnegative $f$, which is an essential step in our entire argument.
Notice that the critical point $-1/2$ appears also when other aspects of Jacobi
expansions and Jacobi polynomials are studied, see for instance Askey's monograph \cite{As}.

\begin{rem}
Assume that $\alpha,\beta \in (-1,\infty)^d$, $1<p<\infty$, and let
$$
g_{V}(f)(x) = \bigg( \int_0^{\infty} t \big| \partial_t \P_t^{\ab} f(x) \big|^2 \,dt
    \bigg)^{1\slash 2}
$$
be the ``vertical'' $g$-function associated with the Jacobi-Poisson semigroup.
It follows by the general Littlewood-Paley theory for semigroups
(cf. \cite[Chapter 4, Sections 5 and 6]{St}) that the two-sided inequality
$$
c_p^{-1} \|f\|_{p,\ab} \le \|g_{V}(f)\|_{p,\ab}
    + \| \Pi_0^{\perp} f\|_{p,\ab} \le c_p \|f\|_{p,\ab}
$$
holds for all $f \in {L^p(d\m_{\ab})}$; here
$\Pi_0^{\perp}f = \int f d\m_{\ab} \slash \int d\m_{\ab}$ coincides on $L^2(d\m_{\ab})$
with the orthogonal projection onto the space spanned by constant functions.
Since obviously $g_{V}(f) \le g(f)$, this shows that the lower bound in 
Theorem \ref{main_theorem} (a),
$$
c_p^{-1} \|f\|_{p,\ab} \le \|g(f)\|_{p,\ab}, \qquad
f \in {L^p(d\m_{\ab})},
$$
holds provided that $\int f d\m_{\ab} = 0$.
\end{rem}

\begin{proof}[Proof of Theorem \ref{main_theorem} (b)]
In view of the results in Section \ref{sec:supplementary}, 
this two-sided, dimension-free inequality
is a direct consequence of existing results. More precisely,
since for $\alpha,\beta \in [-1\slash 2, \infty)^d$ the semigroups
$\widetilde{\P}^{\ab,i}_t,\; i=1,\ldots,d,$
form positive symmetric contraction semigroups (see Corollary \ref{cor_1}), these
inequalities follow from the refinement of Stein's general Littlewood-Paley
theory \cite{St} due to Coifman, Rochberg and Weiss \cite{CRW}; 
see also Meda \cite[Theorem 2]{M}. 
\end{proof}

The remaining part of this section is devoted to the proof of Theorem \ref{main_theorem} (a).
For a $C^2$ function $F=F(t,x)$ define
\[
\JE^{\ab} F(t,x) = \partial_t^2 F(t,x)
        - \J^{\ab} F(t,x).
\]
We will need several technical lemmas.
\begin{lem} \label{lem_1}
Let $F=F(t,x)$ be a $C^2$ function mapping $(0,\infty) \times (-1,1)^d$ into $(0,\infty)$ such
that $\JE^{\ab} F = 0.$ Then for any $p \ge 1$ we have
\[
\JE^{\ab} (F^p) = p(p-1)F^{p-2} |\nabla _{\ab}F|^2.
\]
\end{lem}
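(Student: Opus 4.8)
The plan is to reduce the identity to the standard ``chain rule'' (carr\'e du champ) computation for the diffusion operator $\J^{\ab}$, combined with the elementary Leibniz rule for $\partial_t^2$. The decisive structural fact to exploit is that $\J^{\ab}$ is a second-order operator with \emph{no} zeroth-order term, so that composing with the smooth map $s \mapsto s^p$ (well defined and $C^2$ on $(0,\infty)$, which is where $F$ takes values) produces exactly a first-order ``gradient-squared'' correction and no lower-order remainder. Since $F>0$, the function $F^p$ is $C^2$ on $(0,\infty)\times(-1,1)^d$, so all the derivatives below make sense.

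First I would record the temporal part, which is immediate:
\[
\partial_t^2(F^p) = p(p-1)F^{p-2}(\partial_t F)^2 + pF^{p-1}\partial_t^2 F.
\]
For the spatial part I would work from the non-divergence form
\[
\J^{\ab} = -\sum_{i=1}^d\Big[(1-x_i^2)\partial_{x_i}^2 + \big(\beta_i - \alpha_i - (\alpha_i+\beta_i+2)x_i\big)\partial_{x_i}\Big]
\]
and apply the one-variable chain rule $\partial_{x_i}^2(F^p) = p(p-1)F^{p-2}(\partial_{x_i}F)^2 + pF^{p-1}\partial_{x_i}^2 F$ termwise. The first-order coefficients multiply only the factor $pF^{p-1}\partial_{x_i}F$, so collecting the $\phi'$ and $\phi''$ contributions separately yields
\[
\J^{\ab}(F^p) = pF^{p-1}\,\J^{\ab}F - p(p-1)F^{p-2}\sum_{i=1}^d(1-x_i^2)(\partial_{x_i}F)^2.
\]
The key simplification here is that the coefficient of the second-order operator is precisely $1-x_i^2 = \sqr_i^2$, so the correction term equals $\sum_i \sqr_i^2(\partial_{x_i}F)^2 = \sum_i(\delta_i F)^2$, which is exactly the spatial part of $|\nabla_{\ab}F|^2$.

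Finally I would subtract the two computations. Using $\JE^{\ab} = \partial_t^2 - \J^{\ab}$, the $pF^{p-1}$ terms assemble into $\JE^{\ab}F$ and the $p(p-1)F^{p-2}$ terms assemble into $|\nabla_{\ab}F|^2$, giving
\[
\JE^{\ab}(F^p) = pF^{p-1}\big(\partial_t^2 F - \J^{\ab}F\big) + p(p-1)F^{p-2}\Big((\partial_t F)^2 + \sum_{i=1}^d(\delta_i F)^2\Big).
\]
Since $\JE^{\ab}F = 0$ by hypothesis and the bracket is $|\nabla_{\ab}F|^2$, the first summand drops out and the claimed formula follows. There is no serious obstacle here; the only point demanding care is the bookkeeping that confirms the first-order part of $\J^{\ab}$ contributes nothing to the $\phi''$ correction — equivalently, that $\J^{\ab}$ carries no zeroth-order term, so that the carr\'e du champ reduces to the gradient squared with the correct weight $\sqr_i^2$.
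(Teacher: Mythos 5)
Your computation is correct and is precisely the ``elementary computation'' the paper invokes without detail: the chain rule applied termwise to $\partial_t^2$ and to the non-divergence form of $\J^{\ab}$, with the second-order coefficients $1-x_i^2=\sqr_i^2$ turning the correction term into $\sum_i(\delta_i F)^2$, so that $\JE^{\ab}(F^p)=pF^{p-1}\JE^{\ab}F+p(p-1)F^{p-2}|\nabla_{\ab}F|^2$ and the first term vanishes by hypothesis. Nothing is missing; your observation that $\J^{\ab}$ has no zeroth-order term is exactly why no lower-order remainder appears.
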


\begin{proof}
The result follows by an elementary computation.
\end{proof}

\begin{lem} \label{lem_parts}
Let $F \colon (0,\infty) \times (-1,1)^d \mapsto (0,\infty)$ be a $C^2$ function
such that $\JE^{\ab} F \ge 0$ or
$\int_0^{\infty} \int t \big|\JE^{\ab} F(t,x)\big|
\, d\m_{\ab}(x)dt < \infty.$
Assume that
\begin{enumerate}
\item[(a)]
$\sup \{ |F(t,x)| : t>0, x\in (-1,1)^d \}  < \infty;$
\item[(b)]
$\sup \{ |\nabla_{\! x} F(t,x)| :   t>0, x\in (-1,1)^d \}  < \infty;$
\item[(c)]
$t | \partial_t F(t,x)| \le  \phi(t)$ for all $t>0,$
where the function $\phi$ is continuous, vanishes at $0$ and $\infty,$
and satisfies $\int_0^{\infty} t^{-1} \phi(t) \, dt < \infty.$
\end{enumerate}
Then for each $x$ the limits
$F(0,x) = \lim_{t \to 0^{+}} F(t,x)$ and $F(\infty,x) = \lim_{t \to \infty} F(t,x)$ exist, and
\begin{equation*}
\int_0^{\infty} \int t \; \JE^{\ab} F(t,x) \, d\m_{\ab}(x)dt 
 =
\int F(0,x) \, d\m_{\ab}(x) - \int F(\infty,x) \, d\m_{\ab}(x) ;
\end{equation*}
here the integrals are finite.
\end{lem}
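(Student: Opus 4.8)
The plan is to integrate out the space variables and reduce the identity to a one-dimensional computation in the variable $t$. I will use repeatedly that $d\m_{\ab}$ is a \emph{finite} measure on $(-1,1)^d$ (as $\alpha_i,\beta_i>-1$) and that, by (c), $|\partial_tF(t,x)|\le\phi(t)\slash t$ holds uniformly in $x$ with $\phi(t)\slash t$ integrable over $(0,\infty)$. The existence of the boundary values is then immediate: for fixed $x$ the function $s\mapsto\partial_sF(s,x)$ is integrable on $(0,\infty)$, so $F(t,x)$ has finite limits $F(0,x)$ and $F(\infty,x)$ as $t\to0^+$ and $t\to\infty$; by (a) these are bounded, and since $\m_{\ab}$ is finite the two integrals on the right-hand side of the asserted identity are finite.

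Set $h(t)=\int F(t,x)\,d\m_{\ab}(x)$. I would first show that $h'(t)=\int\partial_tF\,d\m_{\ab}$ and
\[
h''(t)=\int\JE^{\ab}F(t,x)\,d\m_{\ab}(x).
\]
The first relation is differentiation under the integral sign, legitimate because $\partial_tF$ is dominated by $\phi(t)\slash t$ and $\m_{\ab}$ is finite; it also gives $t|h'(t)|\le\phi(t)\,\m_{\ab}((-1,1)^d)\to0$ as $t\to0^+$ and $t\to\infty$. For the second relation the difficulty is that no pointwise bound on $\partial_t^2F$ is available, so I would not differentiate $h'$ directly. Instead I exhaust $(-1,1)^d$ by the cubes $K_\varepsilon=[-1+\varepsilon,1-\varepsilon]^d$; on $K_\varepsilon$ the function $F$ and its derivatives are continuous up to the boundary, so differentiation under the integral sign and Fubini cause no trouble, and the pointwise identity $\partial_t^2F=\JE^{\ab}F+\J^{\ab}F$ yields
\[
\int_{K_\varepsilon}\partial_t^2F\,d\m_{\ab}=\int_{K_\varepsilon}\JE^{\ab}F\,d\m_{\ab}+\int_{K_\varepsilon}\J^{\ab}F\,d\m_{\ab}.
\]
Writing $\J^{\ab}$ in the divergence form \eqref{divergent} and integrating by parts once in each $x_i$, the last integral collapses to boundary contributions of $(1-x_i)^{\alpha_i+1}(1+x_i)^{\beta_i+1}\partial_{x_i}F$ at $x_i=\pm(1-\varepsilon)$. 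I regard this as the main obstacle: the weight $d\m_{\ab}$ is singular at $\pm1$, and the whole point is that the boundedness of $\partial_{x_i}F$ from (b), together with $\alpha_i+1,\beta_i+1>0$, forces these boundary contributions to vanish as $\varepsilon\to0$.

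Letting $\varepsilon\to0$ and using that $F$ is bounded, that $\partial_tF$ is dominated by $\phi(t)\slash t$, and the boundary vanishing just established, I obtain $h'(t_2)-h'(t_1)=\int_{t_1}^{t_2}\int\JE^{\ab}F\,d\m_{\ab}\,ds$, which is the displayed formula for $h''$. Finally I would interchange the order of integration in $\int_0^\infty\int t\,\JE^{\ab}F\,d\m_{\ab}\,dt$ — justified by Tonelli when $\JE^{\ab}F\ge0$ and by Fubini under the integrability hypothesis — to rewrite it as $\int_0^\infty t\,h''(t)\,dt$. One integration by parts in $t$, whose boundary term $[t\,h'(t)]_0^\infty$ vanishes by the estimate on $th'$ above, then gives
\[
\int_0^\infty t\,h''(t)\,dt=-\int_0^\infty h'(t)\,dt=h(0)-h(\infty).
\]
Dominated convergence (again $F$ bounded and $\m_{\ab}$ finite) identifies $h(0)=\int F(0,x)\,d\m_{\ab}$ and $h(\infty)=\int F(\infty,x)\,d\m_{\ab}$, which is the asserted identity; when $\JE^{\ab}F\ge0$ the same chain of equalities also shows that the double integral is finite.
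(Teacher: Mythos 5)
Your proof is correct and follows essentially the same route as the paper's: both hinge on integrating by parts in $x$ via the divergence form \eqref{divergent}, with the boundary contributions at $x_i=\pm(1-\varepsilon)$, of size $O(\varepsilon^{\alpha_i+1}+\varepsilon^{\beta_i+1})$, killed by hypothesis (b), followed by an integration by parts in $t$ whose boundary term is controlled by (c), and a final appeal to Tonelli/Fubini and bounded convergence. The only difference is organizational: you integrate out $x$ first and run the $t$-analysis on $h(t)=\int F(t,x)\,d\m_{\ab}(x)$, whereas the paper truncates jointly over $\mathcal{D}_\varepsilon=(\varepsilon,-\ln\varepsilon)\times(-1+\varepsilon,1-\varepsilon)^d$ and integrates by parts in $t$ pointwise in $x$ --- a repackaging of the same estimates rather than a different argument.
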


\begin{proof}
First note that for each $x \in (-1,1)^d$ the desired limits
exist, because we may write
$$
F(t+T,x) - F(t,x)  = \int_{t}^{t+T} \partial_s F(s,x) \, ds
$$
and the conclusion follows by the condition (c).

Now, observe that by \eqref{divergent}
\begin{align*}
& \JE^{\ab} F(t,x) \\& = \partial^2_t F(t,x) + \sum _{j=1}^{d}
    (1-x_j)^{-\alpha_j}(1+x_j)^{-\beta_j} \partial_{x_j} \big[ (1-x_j)^{\alpha_j+1}
        (1+x_j)^{\beta_j+1} \partial_{x_j} F(t,x) \big].
\end{align*}
For $0< \varepsilon < 1$ let $\mathcal{D}_{\varepsilon} = (\varepsilon,-\ln \varepsilon) \times
    (-1+\varepsilon,1-\varepsilon)^d.$ Given $j \in \{1,\ldots, d\}$ we have
\begin{align*}
& \int_{-1+\varepsilon}^{1-\varepsilon} (1-x_j)^{-\alpha_j}(1+x_j)^{-\beta_j} \partial_{x_j}
        \big[ (1-x_j)^{\alpha_j+1} (1+x_j)^{\beta_j+1} \partial_{x_j} F(t,x) \big]
        \, d\m_{(\alpha_j,\beta_j)}(x_j)\\
& = \varepsilon^{\alpha_j+1}(2-\varepsilon)^{\beta_j+1}
    \partial_{x_j} F(t,x)\big|_{x_j=1-\varepsilon} -
    (2-\varepsilon)^{\alpha_j+1} \varepsilon^{\beta_j+1}
    \partial_{x_j} F(t,x)\big|_{x_j=-1+\varepsilon},
\end{align*}
hence, by (b),
\begin{align*}
& \bigg|
\int\!\!\!\int_{\mathcal{D}_{\varepsilon}} t (1-x_j)^{-\alpha_j}(1+x_j)^{-\beta_j} \partial_{x_j}
        \big[ (1-x_j)^{\alpha_j+1} (1+x_j)^{\beta_j+1} \partial_{x_j} F(t,x) \big]
        \, d\m_{\ab}(x) dt \bigg| \\
& \le c\, (\ln \varepsilon)^2 \big( \varepsilon^{\alpha_j+1}
(2-\varepsilon)^{\beta_j+1} +
    (2-\varepsilon)^{\alpha_j+1} \varepsilon^{\beta_j+1}  \big).
\end{align*}
Therefore,
\[
\int\!\!\!\int_{\mathcal{D}_{\varepsilon}} t \sum _{j=1}^{d}
    (1-x_j)^{-\alpha_j}(1+x_j)^{-\beta_j} \partial_{x_j} \big[ (1-x_j)^{\alpha_j+1}
        (1+x_j)^{\beta_j+1} \partial_{x_j} F(t,x) \big] \, d\m_{\ab}(x)dt
\]
tends to $0$ as $\varepsilon \longrightarrow 0^{+}.$
This, together with the monotone or the dominated convergence theorem, implies
\[
\int_0^{\infty} \int t \, \JE^{\ab} F(t,x) \, d\m_{\ab}(x)dt
= \lim _{\varepsilon \to 0^{+}} \int\!\!\!\int_{\mathcal{D}_{\varepsilon}} t
\partial^2_{t} F(t,x) \, dt d\m_{\ab}(x).
\]
On the other hand, integrating by parts we obtain
\[
\int_{\varepsilon}^{-\ln \varepsilon} t \partial^2_t F(t,x) \, dt  =
    F(\varepsilon ,x) - F(-\ln \varepsilon,x)
        + t \partial_t F(t,x) \big|_{t=\varepsilon}^{t=-\ln \varepsilon}.
\]
By (c) the absolute value of the last term is estimated from above by
$\phi(\varepsilon) + \phi(-\ln \varepsilon).$
Since $\lim_{t \to 0^{+}} \phi(t) = \lim_{t \to \infty} \phi(t) = 0,$
the proof is finished with the aid of (a) and the bounded convergence theorem.
\end{proof}

\begin{prop} \label{lem_sat_by_P}
Lemma \ref{lem_parts} may be applied to the function
\[
F(t,x) = \big(\P^{\ab}_t f(x)\big)^p,
\]
where $p\ge 1$ and $f$ is an arbitrary nonnegative function from $C_c^2((-1,1)^d).$
\end{prop}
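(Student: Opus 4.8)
The plan is to check, one at a time, the hypotheses of Lemma \ref{lem_parts} for $F(t,x)=\big(\P^{\ab}_t f(x)\big)^p$, where we may assume $f\not\equiv 0$ (otherwise $F\equiv 0$ and there is nothing to prove). First, by Proposition \ref{diff_equations} the function $\P^{\ab}_t f$ is $C^{\infty}$ on $(0,\infty)\times(-1,1)^d$, and it is strictly positive there, since the Jacobi--Poisson kernel inherits positivity from the strictly positive heat kernel $G^{\ab}_t$ through subordination. Hence $F$ is $C^{\infty}$, in particular $C^2$, and maps into $(0,\infty)$. Because $\JE^{\ab}\P^{\ab}_t f=0$ (Proposition \ref{diff_equations}) and $\P^{\ab}_t f>0$, Lemma \ref{lem_1} yields
\[
\JE^{\ab}F=p(p-1)\,\big(\P^{\ab}_t f\big)^{p-2}\,\big|\nabla_{\ab}\P^{\ab}_t f\big|^2\ge 0,\qquad p\ge 1,
\]
so the first alternative hypothesis of Lemma \ref{lem_parts} holds. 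Condition (a) is immediate: $\P^{\ab}_t$ is an $L^{\infty}$-contraction, whence $0\le\P^{\ab}_t f\le\|f\|_{\infty}$ and $F\le\|f\|_{\infty}^p$.

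The essential point is condition (b), the uniform bound on the spatial gradient up to the boundary and down to $t=0$; here naive term-by-term or kernel estimates fail near $x_j=\pm1$. The remedy is to intertwine $\partial_{x_j}$ with the modified semigroup. Using \eqref{diff_jac_pol} together with Lemma \ref{lem_intr} (the functions $\Phi_j\jac^{(\alpha+e_j,\beta+e_j)}_{k}$ are $M^{\ab}_j$-eigenfunctions with eigenvalue $\lambda^{\ab}_{k+e_j}$, matching the Poisson factor $e^{-t\lambda_k^{1/2}}$), a termwise comparison of the two eigenfunction expansions gives
\[
\partial_{x_j}\P^{\ab}_t f=\Phi_j^{-1}\,\widetilde{\P}^{\ab,j}_t\big(\Phi_j\,\partial_{x_j}f\big),\qquad j=1,\ldots,d .
\]
For $f\in C^2_c$ the function $\Phi_j\partial_{x_j}f$ is bounded and lies in $L^2(d\m_{\ab})$, and the kernel $\widetilde{G}^{\ab,j}_t$ is positive, so $\big|\widetilde{\P}^{\ab,j}_t(\Phi_j\partial_{x_j}f)\big|\le\|\partial_{x_j}f\|_{\infty}\,\widetilde{\P}^{\ab,j}_t\Phi_j$. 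The crucial observation is that $\Phi_j$ is itself an $M^{\ab}_j$-eigenfunction, namely $\Phi_j=\Phi_j\jac^{(\alpha+e_j,\beta+e_j)}_{0}$ (recall $\jac_{0}\equiv 1$) with eigenvalue $\lambda^{\ab}_{e_j}=\alpha_j+\beta_j+2$, so that $\widetilde{\P}^{\ab,j}_t\Phi_j=e^{-t\sqrt{\alpha_j+\beta_j+2}}\,\Phi_j\le\Phi_j$. The factor $\Phi_j^{-1}$ then cancels exactly, yielding $\big|\partial_{x_j}\P^{\ab}_t f(x)\big|\le\|\partial_{x_j}f\|_{\infty}$ uniformly, and hence a uniform bound for $\nabla_{\! x}F=p(\P^{\ab}_t f)^{p-1}\nabla_{\! x}\P^{\ab}_t f$.

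For condition (c) I would use subordination. Setting $g=\J^{\ab}f$, which for $f\in C^2_c$ is bounded, continuous and compactly supported with $\int g\,d\m_{\ab}=\langle f,\J^{\ab}\mathbf 1\rangle_{\ab}=0$ by symmetry of $\J^{\ab}$, differentiation of the subordination formula gives
\[
\partial_t\P^{\ab}_t f=-\frac{t}{2\sqrt{\pi}}\int_0^{\infty}\frac{e^{-u}}{u^{3/2}}\,T^{\ab}_{t^2/(4u)}g\,du .
\]
The vanishing mean of $g$ together with the spectral gap of $\J^{\ab}$ gives $|T^{\ab}_s g(x)|\le C e^{-\lambda_* s}$ for all $s>0$, where $\lambda_*=\min_i(\alpha_i+\beta_i+2)>0$ is the smallest nonzero eigenvalue; this is precisely what tames the singularity at $u=0$ (equivalently $s\to\infty$). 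A standard evaluation of the resulting subordination integral then produces
\[
t\,\big|\partial_t\P^{\ab}_t f(x)\big|\le C\,t\,e^{-\sqrt{\lambda_*}\,t}=:\phi(t),
\]
and $\phi$ is continuous, vanishes at $0$ and at $\infty$, and satisfies $\int_0^{\infty}t^{-1}\phi(t)\,dt<\infty$. Since $t|\partial_t F|\le p\|f\|_{\infty}^{p-1}\,t|\partial_t\P^{\ab}_t f|$, condition (c) follows with $\phi$ replaced by $p\|f\|_{\infty}^{p-1}\phi$.

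I expect the intertwining in condition (b) to be the main obstacle: the remaining conditions are soft consequences of contractivity, positivity and the Poisson equation, whereas (b) genuinely requires the modified semigroup and the fortunate fact that $\Phi_j$ is an eigenfunction of $M^{\ab}_j$, which exactly compensates the blow-up of $\Phi_j^{-1}$ at the boundary. A secondary technical point is the small-$t$ behaviour in (c), where the vanishing mean of $\J^{\ab}f$ is indispensable for the convergence of the subordination integral.
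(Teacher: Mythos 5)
Your verification is correct, and its skeleton coincides with the paper's: strict positivity of $\P^{\ab}_t f$ (from positivity of $G^{\ab}_t$ and subordination) plus Lemma \ref{lem_1} for $\JE^{\ab}F\ge 0$, $L^\infty$-contractivity for (a), an intertwining identity for (b), and a differentiated subordination formula for (c). Within this skeleton, two of your steps deviate from the paper. For (b), the paper uses the parameter-shift identity $\partial_{x_j}T^{\ab}_tf=\e^{-t(\alpha_j+\beta_j+2)}T^{(\alpha+e_j,\beta+e_j)}_t(\partial_{x_j}f)$ and subordination, getting $|\partial_{x_j}\P^{\ab}_tf|\le c\,\e^{-t\sqrt{\alpha_j+\beta_j+2}}$; your route via $\delta_j\P^{\ab}_t=\widetilde{\P}^{\ab,j}_t\delta_j$ and the eigenfunction relation $\widetilde{\P}^{\ab,j}_t\sqr_j=\e^{-t\sqrt{\alpha_j+\beta_j+2}}\sqr_j$ is the same identity in disguise, since $\widetilde{G}^{\ab,j}_t(x,y)=\e^{-t(\alpha_j+\beta_j+2)}\sqr_j(x)\sqr_j(y)G^{(\alpha+e_j,\beta+e_j)}_t(x,y)$ and $\sqr_j^2\,d\m_{\ab}=d\m_{(\alpha+e_j,\beta+e_j)}$ give $\widetilde{T}^{\ab,j}_t(\sqr_j h)=\e^{-t(\alpha_j+\beta_j+2)}\sqr_j\,T^{(\alpha+e_j,\beta+e_j)}_th$; correctly, you use only kernel positivity of $\widetilde{G}^{\ab,j}_t$ (valid for all $\alpha,\beta\in(-1,\infty)^d$), not the contractivity of Corollary \ref{cor_1}, so no restriction to $[-1/2,\infty)^d$ sneaks in. For (c) you genuinely differ: the paper expands $\J^{\ab}T^{\ab}_sf$ into first- and second-order derivative terms and intertwines each with the shifted-parameter heat semigroups, so that the factors $\e^{-t^2(\alpha_j+\beta_j+2)/(4u)}$ tame the $u^{-3/2}$ singularity; you instead move $\J^{\ab}$ onto $f$, note $\int\J^{\ab}f\,d\m_{\ab}=0$, and use exponential decay of $T^{\ab}_s(\J^{\ab}f)$ — both yield the same $\phi(t)=c\,t\,\e^{-t\sqrt{\lambda_*}}$ with $\lambda_*=\min_j(\alpha_j+\beta_j+2)$. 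Your approach has the merit of identifying the decay as a spectral-gap phenomenon for mean-zero data; the paper's has the merit of requiring only $L^\infty$-contractivity. One step you should make explicit: the spectral gap per se gives only $\|T^{\ab}_sg\|_{2,\ab}\le\e^{-\lambda_*s}\|g\|_{2,\ab}$, whereas you assert the uniform pointwise bound $|T^{\ab}_sg(x)|\le C\e^{-\lambda_*s}$. This upgrade is routine — estimate the series for $T^{\ab}_sg$ termwise using \eqref{jac_est} and \eqref{L2normest} for $s$ bounded away from $0$ and use the trivial bound $\|g\|_\infty$ for small $s$, or factor $T^{\ab}_s=T^{\ab}_{s_0}T^{\ab}_{s-s_0}$ and use boundedness of the kernel $G^{\ab}_{s_0}$ — but it is not a consequence of the gap alone; the same bound then serves as the dominant justifying differentiation under the subordination integral, a point the paper settles by dominated convergence and which you should record as well. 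These are presentational rather than substantive gaps.
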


\begin{proof}
By the subordination principle and Proposition \ref{diff_equations}, we get
\begin{equation*}
\partial_t \P^{\ab}_t f(x) =
\frac{1}{\sqrt{\pi}} \int_0^{\infty} \frac{{\e}^{-u}}{\sqrt{u}} \partial_t
        \Big(T^{\ab}_{{t^2}\slash (4u)} f(x) \Big) \, du
 = \frac{-1}{2\sqrt{\pi}} \int_0^{\infty} \frac{{\e}^{-u}}{\sqrt{u}} \frac{t}{u}
        \J^{\ab} \Big(T^{\ab}_{{t^2} \slash (4u)} f(x) \Big) \, du.
\end{equation*}
Interchanging the order of differentiation and integration above is justified by the dominated
convergence theorem, using (see also the considerations below)
\begin{align*}
\partial_{x_j} T^{\ab}_t f(x) & = {\e}^{-t(\alpha_j+\beta_j+2)} T^{(\alpha+e_j,\beta+e_j)}_t
        \big( \partial_{x_j} f \big)(x),\\ \nonumber
\partial^2_{x_j} T^{\ab}_t f(x) & = {\e}^{-2t(\alpha_j+\beta_j+3)}
        T^{(\alpha+2e_j,\beta+2e_j)}_t \big( \partial^2_{x_j} f \big)(x).
\end{align*}
These identities are easily verified for Jacobi polynomials, and for
$f \in C^2_c((-1,1)^d)$ they are checked by term by term
differentiation of the series defining $T^{\ab}_t f$; 
see the proof of Proposition \ref{diff_equations}. Thus
\begin{align*}
&\partial_t \P^{\ab}_t f(x) \\
& = \frac{1}{2\sqrt{\pi}} \int_0^{\infty} \frac{{\e}^{-u}}{\sqrt{u}} \frac{t}{u}
        {\e}^{-t^2 (\alpha_j+\beta_j+3)\slash (2u)} \sum_{j=1}^d (1-x^2_j)
        T^{(\alpha+2e_j,\beta+2e_j)}_{{t^2} \slash (4u)}
        \big(\partial^2_{x_j} f \big) (x)  \, du\\
& \quad + \frac{1}{2\sqrt{\pi}} \int_0^{\infty} \frac{{\e}^{-u}}{\sqrt{u}} \frac{t}{u}
        {\e}^{-t^2 (\alpha_j+\beta_j+2)\slash (4u)} \sum_{j=1}^d (\beta_j-\alpha_j)
        T^{(\alpha+e_j,\beta+e_j)}_{{t^2} \slash (4u)}
        \big(\partial_{x_j} f \big) (x)  \, du\\
& \quad - \frac{1}{2\sqrt{\pi}} \int_0^{\infty} \frac{{\e}^{-u}}{\sqrt{u}} \frac{t}{u}
        {\e}^{-t^2 (\alpha_j+\beta_j+2)\slash (4u)} \sum_{j=1}^d (\alpha_j+\beta_j+2) x_j
        T^{(\alpha+e_j,\beta+e_j)}_{{t^2} \slash (4u)}
        \big(\partial_{x_j} f \big) (x)  \, du\\
& \equiv  \mathcal{I}_1 + \mathcal{I}_2 + \mathcal{I}_3.
\end{align*}
Since the $T^{\ab}_t$ are contractions on $L^{\infty}((-1,1)^d)$ and $f$ has bounded first
and second order derivatives, we have
$$
|\mathcal{I}_1| + |\mathcal{I}_2| + |\mathcal{I}_3| \le 
	c \, \sum _{j=1}^d \int_0^{\infty} \frac{{\e}^{-u}}{\sqrt{u}}
       \frac{t}{u} {\e}^{-t^2 (\alpha_j+\beta_j+2)\slash (4u)} \, du =
       c \, \sum _{j=1}^d {\e}^{-t\sqrt{\alpha_j+\beta_j+2}}.
$$
Consequently,
\[
\big| t \partial_t \P^{\ab}_t f(x)\big| \le c \, t \, {\e}^{-t \min_j \sqrt{\alpha_j+\beta_j + 2}}.
\]
Since $f$ is bounded and $\P^{\ab}_t$ are contractions on $L^{\infty}((-1,1)^d)$
in view of the same property for $T^{\ab}_t$ and the subordination principle,
the hypotheses (a) and (c) of Lemma \ref{lem_parts} are satisfied
with $\phi(t)=c t{\e}^{-t\min_j \sqrt{\alpha_j+\beta_j + 2}}$.
Concerning (b), we have
\begin{align*}
\big| \partial_{x_j} \P^{\ab}_t f(x) \big| & = \bigg|
\frac{1}{\sqrt{\pi}} \int_0^{\infty} \frac{{\e}^{-u}}{\sqrt{u}}
        {\e}^{-t^2 (\alpha_j+\beta_j+2) \slash (4u)}
        T^{(\alpha+e_j,\beta+e_j)}_{{t^2} \slash (4u)}
        \big(\partial_{x_j} f \big) (x)  \, du \bigg|\\
& \le {c} \int_0^{\infty} \frac{{\e}^{-u}}{\sqrt{u}}
        {\e}^{-t^2(\alpha_j+\beta_j+2)\slash (4u)}
        \, du \\
& =  c\, {\e}^{-t\sqrt{\alpha_j+\beta_j+2}}.
\end{align*}
The inequality $\JE^{\ab} \big[(\P^{\ab}_t f(x))^p \big] \ge 0$
is justified with the aid of Proposition \ref{diff_equations} and Lemma \ref{lem_1}.
Indeed, an application of Lemma \ref{lem_1} is possible since here $\P^{\ab}_t f$ is strictly
positive if $f(x_0)>0$ for some $x_0 \in (-1,1)^d;$
this, in turn, follows by the subordination formula and the strict positivity of the kernel $G^{\ab}_t(x,y)$.
\end{proof}

\begin{prop} \label{prop_g}
Let $f \in C_c^2((-1,1)^d)$ be nonnegative. Then $g(f)$ is bounded on $(-1,1)^d.$
\end{prop}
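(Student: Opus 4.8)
The goal is to show that $g(f)(x)$ is bounded on $(-1,1)^d$ when $f \in C_c^2((-1,1)^d)$ is nonnegative. Recall that
\[
g(f)(x)^2 = \int_0^{\infty} t \big|\nabla_{\ab} \P^{\ab}_t f(x)\big|^2 \, dt
= \int_0^{\infty} t \Big( |\partial_t \P^{\ab}_t f(x)|^2 + |\grad_{\ab} \P^{\ab}_t f(x)|^2 \Big) \, dt,
\]
so it suffices to bound each component integrand by a function of $t$ alone that is integrable against $t\,dt$ over $(0,\infty)$, uniformly in $x$. The key observation is that the relevant pointwise estimates have \emph{already} been established in the proof of Proposition~\ref{lem_sat_by_P}. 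There we obtained
\[
\big| \partial_t \P^{\ab}_t f(x)\big| \le c \, {\e}^{-t \min_j \sqrt{\alpha_j+\beta_j + 2}}
\qquad\text{and}\qquad
\big| \partial_{x_j} \P^{\ab}_t f(x) \big| \le c \, {\e}^{-t\sqrt{\alpha_j+\beta_j+2}},
\]
both uniformly in $x \in (-1,1)^d$.

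First I would deal with the time derivative. Using the first estimate above directly gives
\[
\int_0^{\infty} t \,\big|\partial_t \P^{\ab}_t f(x)\big|^2 \, dt
\le c^2 \int_0^{\infty} t \, {\e}^{-2t \min_j \sqrt{\alpha_j+\beta_j+2}} \, dt,
\]
which is a finite constant independent of $x$ (the integral is of the form $\int_0^\infty t\,\e^{-at}\,dt = a^{-2}$ with $a>0$). Next I would treat the spatial part. Since $\delta_j = \sqr_j \partial_{x_j}$ and $|\sqr_j(x)| = \sqrt{1-x_j^2} \le 1$ on $(-1,1)^d$, we have $|\delta_j \P^{\ab}_t f(x)| \le |\partial_{x_j} \P^{\ab}_t f(x)|$, and hence the second estimate yields
\[
|\grad_{\ab}\P^{\ab}_t f(x)|^2 = \sum_{j=1}^d |\delta_j \P^{\ab}_t f(x)|^2
\le c^2 \sum_{j=1}^d {\e}^{-2t\sqrt{\alpha_j+\beta_j+2}}.
\]
Integrating against $t\,dt$ again produces a finite constant. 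Combining the two parts bounds $g(f)(x)^2$ by a constant uniformly in $x$, which is the assertion.

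The only genuine point requiring care — and what I expect to be the main (though minor) obstacle — is not the integrability in $t$, which is immediate from the exponential decay, but rather ensuring that the constants $c$ from Proposition~\ref{lem_sat_by_P}, together with the exponents $\sqrt{\alpha_j+\beta_j+2}$, combine into a bound that is genuinely uniform over all $x$. This is already guaranteed because those pointwise estimates were established uniformly in $x$, relying on the $L^\infty$-contractivity of the underlying heat semigroups $T^{\ab}_t$ and the boundedness of the derivatives of $f$. One should simply remark that the whole argument is just a transcription of the estimates already derived, now squared and integrated; the dimensional dependence enters only through the finite sum over $j$, which is harmless here since we are proving mere boundedness rather than a dimension-free bound. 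I would therefore keep the proof short, citing Proposition~\ref{lem_sat_by_P} for the two pointwise bounds and performing the elementary $t$-integrations.
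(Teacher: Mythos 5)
Your proposal is correct and is essentially identical to the paper's proof, which consists of exactly the same one-line idea: apply the uniform-in-$x$ bounds $|\partial_t \P^{\ab}_t f(x)| \le c\, {\e}^{-t\min_j\sqrt{\alpha_j+\beta_j+2}}$ and $|\partial_{x_j} \P^{\ab}_t f(x)| \le c\,{\e}^{-t\sqrt{\alpha_j+\beta_j+2}}$ obtained in the proof of Proposition~\ref{lem_sat_by_P} and integrate against $t\,dt$. Your additional remarks (using $|\sqr_j|\le 1$ to pass from $\partial_{x_j}$ to $\delta_j$, and noting the exponents are positive since $\alpha_j+\beta_j+2>0$) merely spell out details the paper leaves implicit.
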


\begin{proof}
We apply the estimates of $|\partial_t \P^{\ab}_t f(x)|$ and $|\partial_{x_j} \P^{\ab}_t f(x)|$
obtained in the proof of Proposition \ref{lem_sat_by_P}.
\end{proof}

\begin{prop} \label{lem_sat_by_PP}
Lemma \ref{lem_parts} may be applied to the function
\[
F(t,x) = \big(\P^{\ab}_t f(x)\big)^2 \P^{\ab}_t h(x),
\]
with arbitrary nonnegative $f,h \in C_c^2((-1,1)^d).$
\end{prop}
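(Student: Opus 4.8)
The plan is to set $u=\P^{\ab}_t f$ and $v=\P^{\ab}_t h$ and to verify the hypotheses of Lemma \ref{lem_parts} for $F=u^2v$ directly, reusing the pointwise estimates already established in the proof of Proposition \ref{lem_sat_by_P}. First I would dispose of the trivial case: if $f$ or $h$ vanishes identically then $F\equiv 0$ and there is nothing to check, so I may assume neither does. Then, since $f,h\ge 0$ and the Jacobi heat kernel $G^{\ab}_t$ is strictly positive, the subordination formula gives $u>0$ and $v>0$ everywhere on $(0,\infty)\times(-1,1)^d$, whence $F$ maps into $(0,\infty)$; by Proposition \ref{diff_equations} it is moreover $C^{\infty}$ there.

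To check (a), (b) and (c) I would invoke the bounds at hand: $|u|,|v|\le c$ because $\P^{\ab}_t$ contracts $L^\infty$; $|\partial_{x_j}u|,|\partial_{x_j}v|\le c\,\e^{-t\sqrt{\alpha_j+\beta_j+2}}$; and $|t\partial_t u|,|t\partial_t v|\le c\,t\,\e^{-t\delta_0}$, where $\delta_0=\min_j\sqrt{\alpha_j+\beta_j+2}\ge 1>0$ by the assumption $\alpha,\beta\in[-1\slash 2,\infty)^d$. Boundedness of $F$ is then immediate, giving (a). Since $\partial_{x_j}F=2uv\,\partial_{x_j}u+u^2\partial_{x_j}v$, boundedness of the spatial gradient follows as well, giving (b). For (c) one writes $t\partial_t F=2uv\,(t\partial_t u)+u^2(t\partial_t v)$, so $t|\partial_t F|\le\phi(t):=c\,t\,\e^{-t\delta_0}$; this $\phi$ is continuous, vanishes at $0$ and $\infty$, and $\int_0^{\infty} t^{-1}\phi(t)\,dt<\infty$ precisely because $\delta_0>0$.

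The decisive step is the final condition, and here the point is that $\JE^{\ab}F$ is \emph{not} sign-definite, so I would establish the integrability alternative of Lemma \ref{lem_parts} rather than nonnegativity. Starting from the divergence form \eqref{divergent} of $\J^{\ab}$, a direct computation yields the Leibniz-type identity $\JE^{\ab}(GH)=G\,\JE^{\ab}H+H\,\JE^{\ab}G+2\nabla_{\ab}G\cdot\nabla_{\ab}H$. Combining it with $\JE^{\ab}u=\JE^{\ab}v=0$ (Proposition \ref{diff_equations}), $\JE^{\ab}(u^2)=2|\nabla_{\ab}u|^2$ (Lemma \ref{lem_1} with $p=2$), and $\nabla_{\ab}(u^2)=2u\,\nabla_{\ab}u$, one obtains
\[
\JE^{\ab}F=2v\,|\nabla_{\ab}u|^2+4u\,\nabla_{\ab}u\cdot\nabla_{\ab}v.
\]
The cross term can change sign, which is exactly why nonnegativity fails and the integrability route is forced upon us.

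To close, I would bound the joint gradients by $|\nabla_{\ab}u|,|\nabla_{\ab}v|\le c\,\e^{-t\delta_0}$, which follows from the estimates above once one recalls $|\grad_{\ab}w|^2=\sum_j\sqr_j^2|\partial_{x_j}w|^2\le\sum_j|\partial_{x_j}w|^2$. Together with $|u|,|v|\le c$ this gives the clean decay $|\JE^{\ab}F|\le c\,\e^{-2t\delta_0}$, and since the Jacobi measure is finite,
\[
\int_0^{\infty}\!\!\int t\,\big|\JE^{\ab}F(t,x)\big|\,d\m_{\ab}(x)\,dt\le c\,\Big(\int d\m_{\ab}\Big)\int_0^{\infty} t\,\e^{-2t\delta_0}\,dt<\infty,
\]
which is the last required hypothesis. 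I expect the main obstacle to be organizing this integrability verification: the gradient bounds must be strong enough to absorb the sign-indefinite cross term, and this is where the strict positivity of $\delta_0$ (equivalently $\alpha_j+\beta_j+2\ge 1$, guaranteed by $\alpha,\beta\ge-1\slash 2$) together with the finiteness of $d\m_{\ab}$ are essential; note that the constants here need not be dimension-free, only finite.
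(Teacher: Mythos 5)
Your proof is correct, and in the decisive step it takes a genuinely different (more direct) route than the paper. Both arguments start identically: verify (a)--(c) by reusing the pointwise estimates from the proof of Proposition \ref{lem_sat_by_P}, and expand $\JE^{\ab}\bigl[(\P^{\ab}_t f)^2\,\P^{\ab}_t h\bigr]$ via the product rule \eqref{mult_form}, Lemma \ref{lem_1} with $p=2$, and $\JE^{\ab}\P^{\ab}_t h=0$, both opting for the integrability alternative of Lemma \ref{lem_parts} since the cross term is sign-indefinite. The divergence is in how integrability is closed. The paper treats the two terms separately and modularly: the nonnegative term $\JE^{\ab}\bigl[(\P^{\ab}_t f)^2\bigr]\P^{\ab}_t h$ is handled by bounding $\P^{\ab}_t h$ and invoking Proposition \ref{lem_sat_by_P} (the already-established identity of Lemma \ref{lem_parts} for $(\P^{\ab}_t f)^2$ shows that $\int_0^\infty\!\int t\,\JE^{\ab}\bigl[(\P^{\ab}_t f)^2\bigr]\,d\m_{\ab}\,dt$ is finite), while the cross term is controlled by Cauchy--Schwarz in the $t$-integral, yielding $c\int g(f)\,g(h)\,d\m_{\ab}$, finite by Proposition \ref{prop_g} and the finiteness of the measure. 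You instead push the exponential-decay bounds $|\nabla_{\ab}\P^{\ab}_t f|,\,|\nabla_{\ab}\P^{\ab}_t h|\le c\,\e^{-t\delta_0}$ all the way through, obtaining the stronger pointwise bound $|\JE^{\ab}F|\le c\,\e^{-2t\delta_0}$ and integrating directly; this is perfectly valid (the needed bounds on $\partial_t$ and $\partial_{x_j}$ of $\P^{\ab}_t f$ are exactly those established in the proof of Proposition \ref{lem_sat_by_P}, and your constants may legitimately depend on $d$ and $f,h$ here), and it even renders Proposition \ref{prop_g} superfluous for this step, whereas the paper's route is more structural and recycles the $g$-function machinery. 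Two minor remarks: your appeal to $\alpha,\beta\in[-1\slash 2,\infty)^d$ to get $\delta_0\ge 1$ is unnecessary --- $\delta_0=\min_j\sqrt{\alpha_j+\beta_j+2}>0$ already holds for all $\alpha,\beta\in(-1,\infty)^d$, which is consistent with the proposition carrying no half-integer restriction; and your separate disposal of the trivial case $f\equiv 0$ or $h\equiv 0$ is a sound precaution, since Lemma \ref{lem_parts} as stated requires $F$ to take values in $(0,\infty)$ (the paper handles positivity the same way, via strict positivity of $G^{\ab}_t$ and subordination).
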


\begin{proof}
Items (a)--(c) are verified as in the proof of Proposition \ref{lem_sat_by_P}.
It remains to prove the integrability condition.

Given $C^2$ functions $F,G \colon (0,\infty) \times (-1,1)^d \mapsto
(-\infty,\infty),$ one has
\begin{equation} \label{mult_form}
\JE^{\ab} (FG) = (\JE^{\ab}F) G + (\JE^{\ab} G) F
        + 2 \langle \nabla_{\ab} F, \nabla_{\ab} G \rangle.
\end{equation}
Therefore,
\begin{align*}
& \big|\JE^{\ab} [(\P^{\ab}_t f(x))^2 \P^{\ab}_t h(x)] \big| \\ & \le
    \JE^{\ab} (\P^{\ab}_t f(x))^2 \, \P^{\ab}_t h(x) +
    2 \big|\nabla_{\ab} (\P^{\ab}_t f(x))^2\big| \,
    \big|\nabla_{\ab} \P^{\ab}_t h(x)\big|,
\end{align*}
since
$\JE^{\ab} (\P^{\ab}_t h(x)) = 0$ by Proposition \ref{diff_equations} and the
first quantity in the right-hand side is nonnegative in view of Lemma \ref{lem_1}.
Now, observe that
\begin{align*}
& \int_0^{\infty} \int t \,
        \JE^{\ab} (\P^{\ab}_t f(x))^2 \, \P^{\ab}_t h(x) \,
        d\m_{\ab}(x) dt \\ & \le
        c \int_0^{\infty}\int t \, \JE^{\ab} (\P^{\ab}_t f(x))^2\,
        d\m_{\ab}(x) dt,
\end{align*}
and the last term is finite by Proposition \ref{lem_sat_by_P}. Further, by the Cauchy-Schwarz
inequality and the fact that $\P^{\ab}_t f(x)$ is bounded, we obtain
\begin{align*}
& \int_0^{\infty} \int t \big|\nabla_{\ab} (\P^{\ab}_t f(x))^2\big| \,
    \big|\nabla_{\ab} \P^{\ab}_t h(x)\big| d\m_{\ab}(x) dt \\ & \le
    c  \int g(f)(x)\, g(h)(x) \, d\m_{\ab}(x),
\end{align*}
and the last integral is finite by Proposition \ref{prop_g}.
\end{proof}

\begin{proof}[Proof of Theorem \ref{main_theorem} (a); the case $1<p\le 2$] \ \\
Apart from minor changes, the proof relies on a classic reasoning, see \cite[p.\,51]{St}.
Observe first that it is sufficient to prove the desired estimate for all nonnegative
$f \in C_c^2((-1,1)^d).$ Then the theorem is justified by standard arguments,
that is decomposition into positive and negative real and imaginary parts, 
approximation of each part by a sequence of nonnegative smooth compactly supported functions,
and an application of Fatou's lemma.

Assume that $f \in C_c^2((-1,1)^d), \;f \ge 0$ and $f(x_0) \neq 0$ for some $x_0 \in (-1,1)^d.$
As pointed out in the proof of Proposition \ref{lem_sat_by_P},
we may apply Lemma \ref{lem_1} to $F(t,x) = \P^{\ab}_t f(x),$ getting
$\JE^{\ab} (\P^{\ab}_t f(x))^p \ge 0$ and 
\begin{align*}
p(p-1)[g(f)(x)]^2 & =  p(p-1) \int _{0}^{\infty} t \big|\nabla_{\ab} (\P^{\ab}_t f(x))\big|^2 \, dt\\
& =
 \int_{0}^{\infty} t(\P^{\ab}_t f(x))^{2-p}\; \JE^{\ab}
        (\P^{\ab}_t f(x))^p \, dt\\
& \le
  \big[\P^{\ab}_{*} f(x) \big]^{2-p} \int_{0}^{\infty} t \,
        \JE^{\ab} (\P^{\ab}_t f(x))^p \, dt.
\end{align*}
Thus, by H\"older's inequality,
(\ref{max_ineq}) and Proposition \ref{lem_sat_by_P} we obtain
\begin{align*}
\|g(f)\|^p_{p,\ab} & \le
c_p \int \big[\P^{\ab}_{*} f(x) \big]
            ^{p(1-p\slash 2)} \bigg( \int_0^{\infty} t \, \JE^{\ab}
            (\P^{\ab}_t f(x))^p \, dt \bigg)^{p\slash 2} d\m_{\ab}(x)\\
& \le
c_{p} \|f\|^{p(1-p\slash 2)}_{p,\ab} \bigg( \int ^{\infty}_0
        \int t \, \JE^{\ab} (\P^{\ab}_t f(x))^p \,
        d\m_{\ab}(x) dt \bigg)^{p\slash 2} \\
& =
c_{p} \|f\|^{p(1-p\slash 2)}_{p,\ab} \bigg(\int f(x)^p \,
        d\m_{\ab}(x) - \int (\P^{\ab}_{\infty} f(x))^p \,
                d\m_{\ab}(x) \bigg)^{p \slash 2}.
\end{align*}
Clearly, the last expression is not greater than $c_{p} \|f\|^{p}_{p,\ab}$.
The conclusion follows.
\end{proof}

\begin{proof}[Proof of Theorem \ref{main_theorem} (a); the case $p>2$] \ \\
We begin with some basic observations.
Given a reasonable, real-valued function $f,$ by the Cauchy-Schwarz inequality  we have
\[
\big(\P^{\ab}_t f(x)\big)^2 \le {\P}^{\ab}_t(f^2)(x)\,
    {\P}^{\ab}_t {\bf{1}}(x) = {\P}^{\ab}_t(f^2)(x).
\]
Combined with Corollary \ref{cor_1}, this gives
\[
\big(\widetilde{\P}^{\ab,i}_t f(x)\big)^2 \le
    \big(\widetilde{\P}^{\ab,i}_t |f|(x)\big)^2
 \le \big(\P^{\ab}_t |f|(x)\big)^2
\le {\P}^{\ab}_t (f^2)(x).
\]
Assume that $f \in C_c^1((-1,1)^d).$ Then for any $t_1,t_2>0$
$$
\partial_t \P^{\ab}_{t_1+t_2} f(x) = \P^{\ab}_{t_1} \big(\partial_t \P^{\ab}_{t_2} f \big) (x)
$$
(here and later on we write $\partial_t \P^{\ab}_{\tau}f$ to denote the derivative in $t$ of
$\P^{\ab}_t f$ taken at the point $\tau$, and this even when $\tau = t\slash 2$).
In particular, letting $t_1=t_2=t\slash 2$ it follows that
$$
\partial_t \P^{\ab}_{t} f(x) = \P^{\ab}_{t\slash 2} \big(\partial_t \P^{\ab}_{t \slash 2}
    f \big) (x).
$$
Moreover,
\begin{equation} \label{ex_ord}
\delta_j (\P^{\ab}_t f)(x) = \widetilde{\P}^{\ab,j}_t (\delta_j f)(x),\qquad
    j=1,\ldots,d,
\end{equation}
which is directly verified for Jacobi polynomials by means of Lemma \ref{lem_intr} and \eqref{jac_diff}.
For $f \in C_b^1((-1,1)^d)$ the last identity follows by
differentiating term by term the series of $\P^{\ab}_t f.$ Therefore
\[
\delta_j(\P^{\ab}_{t+s} f)(x) = \widetilde{\P}^{\ab,j}_t (\delta_j \P^{\ab}_s f)(x),
                \quad \quad j=1,\ldots,d.
\]

Let $f,h$ be nonnegative functions in $C_c^2((-1,1)^d).$
Using the above observations, the symmetry of $\P^{\ab}_t$ and Lemma \ref{lem_1} with $p=2,$ we write
\begin{align*}
& \int  [g(f)(x)]^2 h(x) \, d\m_{\ab}(x) \\
& = \int 
    \int_0^{\infty} t \big|\nabla_{\ab} \P^{\ab}_t f(x)\big|^2 h(x) \, dt d\m_{\ab}(x)\\
& =  \int \int_0^{\infty} \! t\bigg[ \! \Big( \P^{\ab}_{t\slash 2} \big(\partial_t
    \P^{\ab}_{t \slash 2} f \big)\!(x) \Big)^2
    \! + \sum _{j=1}^d \! \Big(
    \widetilde{\P}^{\ab,j}_{t \slash 2} \big( \delta_j \P^{\ab}_{t \slash 2} f \big)(x)
    \Big)^2 \bigg] h(x) \, dt d\m_{\ab}(x)\\
& \le   \int \int_0^{\infty} \! t \bigg[ \P^{\ab}_{t\slash 2} \Big(
    \big[\partial_t \P^{\ab}_{t \slash 2} f \big]^2\Big)(x)
    + \sum _{j=1}^d \P^{\ab}_{t \slash 2} \Big( \big[\delta_j
    \P^{\ab}_{t \slash 2} f \big]^2 \Big) (x)  \bigg] h(x) \, dt d\m_{\ab}(x)\\
& =  \int_0^{\infty} t \int \big| \nabla_{\ab} \P^{\ab}_{t\slash 2}
    f(x) \big|^2 \P^{\ab}_{t \slash 2}h(x) \, d\m_{\ab}(x)dt \\
& =  2  \int_0^{\infty} t \int \JE^{\ab} \big[\P^{\ab}_{t}
    f(x) \big]^2 \P^{\ab}_{t}h(x) \, d\m_{\ab}(x)dt.
\end{align*}
Now, by (\ref{mult_form}) and the identity $\JE^{\ab}(\P^{\ab}_t h(x)) = 0,$
the last double integral equals
\begin{align*}
& \int_0^{\infty} t \int \JE^{\ab} \big[ \big( \P^{\ab}_t
    f(x) \big)^2 \P^{\ab}_t h(x) \big] \, d\m_{\ab}(x)dt\\
    & - 2 \int_0^{\infty} t \int
    \Big\langle \nabla_{\ab} \P^{\ab}_t h(x),
    \nabla_{\ab} (\P^{\ab}_t f(x))^2 \Big\rangle \, d\m_{\ab}(x)dt \\
    &  \equiv  \mathcal{J}_1 - \mathcal{J}_2.
\end{align*}
Applying Lemma \ref{lem_parts} to $\mathcal{J}_1$ (this is legitimate by
Proposition \ref{lem_sat_by_PP}) and the Cauchy-Schwarz inequality
to $\mathcal{J}_2$, we arrive at 
\begin{align*}
& \int [g(f)(x)]^2 h(x) \, d\m_{\ab}(x) \\ & \le
    2  \int f^2(x)h(x) \, d\m_{\ab}(x) + 8
    \int \P^{\ab}_{*}f(x) \, g(f)(x) \, g(h)(x) \, d\m_{\ab}(x).
\end{align*}
Assume that $p\ge 4,\; (2\slash p) + (1\slash q) =1$ and $\|h\|_{q,\ab} \le 1.$
By H\"older's inequality for three functions, (\ref{max_ineq}) and
Theorem \ref{main_theorem} (a) with $p$ replaced by $q\le 2$, we obtain
\[
\langle g(f)^2 , h \rangle_{{\ab}} \le
    c_{p} \,  \big( \|f\|^2_{p,\ab}
    + \|g(f)\|_{p,\ab} \|f\|_{p,\ab} \big).
\]
Consequently, also $\|g(f)\|^2_{p,\ab}$ is dominated by the right hand side above.
This implies the desired estimate for $p \ge 4.$ For $2 < p < 4$ the result follows by
Marcinkiewicz' interpolation theorem.
\end{proof}


\section{Riesz transforms and conjugate Poisson integrals}
\label{sec:Riesz}

Recall that the Riesz-Jacobi transform
$\mathcal{R}^{\ab} = \big(R_1^{\ab},\ldots,R_d^{\ab}\big)$
is formally given by \eqref{def_riesz}, which
makes sense for the dense subset of $L^p(d\m_{\ab}), \; 1 \le p < \infty,$
consisting of all polynomials, see \eqref{riesz_pol}.
Assuming to begin with that $f$ is a polynomial,
we define its \emph{conjugate Poisson integrals}  $U^{\ab,i}_t f, \; i=1,\ldots,d,\; t>0,$ by
\begin{equation*}
U^{\ab,i}_t f = \widetilde{\P}^{\ab,i}_t R^{\ab}_{i} f.
\end{equation*}
This definition
is analogous to those for Hermite and Laguerre expansions
and is well motivated by the following set of Cauchy--Riemann type equations:
\begin{align} \label{cr1_jac_pol}
\delta_j U^{\ab,i}_t f & = \delta_i U^{\ab,j}_t f,
\quad \quad i,j = 1, \ldots ,d, \\ \label{cr2_jac_pol}
\delta_j \P^{\ab}_t f & = -\partial_t U^{\ab,j}_t f,
\quad \quad j=1, \ldots ,d, \\ \label{cr3_jac_pol}
\sum^d_{j=1}\delta^{*}_j U^{\ab,j}_t f & = - \partial_t \P^{\ab}_t f.
\end{align}
Moreover, we have the harmonicity relations
\begin{equation} \label{cr5_jac_pol}
\big(\partial^2_t - M^{\ab}_{j}\big) U^{\ab,j}_t f =  0, \quad \quad j=1,\ldots,d.
\end{equation}
The verification of (\ref{cr1_jac_pol})--(\ref{cr5_jac_pol}) is straightforward
when $f$ is a (Jacobi) polynomial.
Indeed, for $i=1,\ldots,d,$ we have (see Section \ref{sec:supplementary})
\begin{align*}
\P^{\ab}_t \jac^{\ab}_k & = {\e}^{-t\lambda_k^{1\slash 2}} \jac^{\ab}_k ,\\
M^{\ab}_{i} \big(\sqr_i\jac^{(\alpha+e_i,\beta+e_i)}_{k-e_i}\big) & =
    \lambda_k \sqr_i\jac^{(\alpha+e_i,\beta+e_i)}_{k-e_i},\\
\widetilde{\P}^{\ab,i}_t \big(\sqr_i\jac^{(\alpha+e_i,\beta+e_i)}_{k-e_i}\big) & =
    {\e}^{-t\lambda_k^{1\slash 2}} \sqr_i\jac^{(\alpha+e_i,\beta+e_i)}_{k-e_i},\\
U^{\ab,i}_t \jac^{\ab}_k & =
    \frac{1}{2}\lambda_k^{-1\slash 2}(k_i+\alpha_i+\beta_i+1)
    {\e}^{-t\lambda_k^{1\slash 2}} \sqr_i\jac^{(\alpha+e_i,\beta+e_i)}_{k-e_i},
\end{align*}
the last equation being valid for $k_i>0,$ and if $k_i=0$ we have $U^{\ab,i}_t \jac^{\ab}_k =0.$
Now the identities (\ref{cr1_jac_pol}) and (\ref{cr2_jac_pol}) are easily verified by means of
(\ref{diff_jac_pol}). To get (\ref{cr3_jac_pol}), observe that
\begin{equation} \label{adj_diff}
\delta^{*}_i \big(\sqr_i\jac^{(\alpha+e_i,\beta+e_i)}_{k-e_i}\big) = 2 k_i \jac^{\ab}_{k},
\end{equation}
which follows by
(\ref{diff_jac_pol}) and the fact that Jacobi polynomials $\jac^{\ab}_k$ are eigenfunctions
of $\J^{\ab}.$ Checking (\ref{cr5_jac_pol}) causes no difficulties.
Finally, it is interesting to observe that
\begin{equation} \label{spf2}
U_t^{\ab,j}f = R^{\ab}_j \P^{\ab}_t f, \qquad j=1,\ldots,d,
\end{equation}
for all polynomials $f$, which again is immediately verified for Jacobi polynomials.

The standard Cauchy-Riemann equations in the complex plane tell us when $u$ and $v$ are the real and
imaginary parts of an analytic function. It is well known that they are also equivalent to the
property that $(u,v)$ is the gradient of a harmonic function, at least in a simply connected region.
It is remarkable that a similar property holds also in our setting, so that
$$
\big( -\P^{\ab}_t \Pi_0 f(x), U_t^{\ab,1}f(x),\ldots, U^{\ab,d}_t f(x) \big)
$$
is the gradient in the sense of $\nabla_{\!\ab}$ of a function $F(t,x)$ that is harmonic
with respect to $\mathbb{J}^{\ab} = \partial_t^2 - \J^{\ab}.$ To see this, let
\begin{equation*}
F(t,x) = (\J^{\ab})^{-1\slash 2} \P^{\ab}_t \Pi_0 f(x) =
    \sum_{|k|>0} a_k(f) \lambda_k^{-1\slash 2} \e^{-t \lambda_k^{1\slash 2}}
 P^{\ab}_k(x)
\end{equation*}
and differentiate term by term to obtain the relevant identity.

We now state the main result of the paper.

\begin{thm}  \label{main_result_of_paper}
Assume that $1 < p < \infty$ and $\alpha,\beta \in [-1\slash 2, \infty)^d.$
There exists a constant $c_p$ (depending neither on the dimension $d$
nor on the type multi-indices $\alpha,\beta$) such that
\[
\big{\|} {R}_i^{\ab} f \big{\|}_{p,\ab}  \le
    c_p \|f\|_{p,\ab},
\]
for all $i=1,\ldots,d$, and all polynomials $f$ in $(-1,1)^d.$ Consequently, the operators
$R^{\ab}_i$ and $U^{\ab,i}_t, \; i=1,\ldots,d,$ $t>0,$ initially defined on the dense
subset of $L^p(d\m_{\ab})$ consisting of all polynomials, 
extend uniquely to bounded linear operators in $L^p(d\m_{\ab})$.
\end{thm}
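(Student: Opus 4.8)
The plan is to bound each $R_i^{\ab}$ by sandwiching it between the two square functions of Theorem~\ref{main_theorem}, the bridge being the Cauchy--Riemann relation \eqref{cr2_jac_pol}. Fix $i \in \{1,\ldots,d\}$ and a polynomial $f$. By the very definition $U_t^{\ab,i}f = \widetilde{\P}_t^{\ab,i} R_i^{\ab}f$, so the supplementary square function of Theorem~\ref{main_theorem}(b), evaluated at $R_i^{\ab}f$, reads
\[
\widetilde{g}_i(R_i^{\ab}f)(x) = \Big( \int_0^\infty t \,\big| \partial_t U_t^{\ab,i}f(x)\big|^2\, dt\Big)^{1/2}.
\]
Substituting $\partial_t U_t^{\ab,i}f = -\delta_i \P_t^{\ab}f$ from \eqref{cr2_jac_pol} turns this into $\big(\int_0^\infty t\,|\delta_i \P_t^{\ab}f(x)|^2\,dt\big)^{1/2}$, which is pointwise dominated by $g(f)(x)$ since $\nabla_{\ab} = (\partial_t,\delta_1,\ldots,\delta_d)$ and hence $|\delta_i \P_t^{\ab}f|^2 \le |\nabla_{\ab}\P_t^{\ab}f|^2$.

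Before chaining estimates I would record one structural fact: the eigenvalues $\lambda_{k+e_i}$ of $M_i^{\ab}$ are all strictly positive, so $\widetilde{\P}_t^{\ab,i}$ fixes no nonzero constants and the lower bound in Theorem~\ref{main_theorem}(b) holds for \emph{every} $L^p$ function, with no mean-zero correction (in contrast with part~(a)). I then apply that lower bound to $R_i^{\ab}f$, which by \eqref{riesz_pol} is a finite combination of the bounded functions $\sqr_i \jac^{(\alpha+e_i,\beta+e_i)}_{k-e_i}$ and so lies in $L^p(d\m_{\ab})$ and is a legitimate argument for $\widetilde{g}_i$. Combining the lower bound, the pointwise domination just obtained, and Theorem~\ref{main_theorem}(a), I get
\[
\|R_i^{\ab}f\|_{p,\ab} \le c_p \big\| \widetilde{g}_i(R_i^{\ab}f)\big\|_{p,\ab} \le c_p \|g(f)\|_{p,\ab} \le c_p \|f\|_{p,\ab},
\]
with every constant independent of $d$ and of $\alpha,\beta$ because Theorem~\ref{main_theorem} supplies dimension- and parameter-free constants. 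This is exactly the asserted bound for $R_i^{\ab}$.

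The extension statement then follows formally. By the estimate just proved and the density of polynomials in $L^p(d\m_{\ab})$, $1<p<\infty$, each $R_i^{\ab}$ extends uniquely to a bounded operator on $L^p(d\m_{\ab})$. For the conjugate Poisson integrals I would write $U_t^{\ab,i} = \widetilde{\P}_t^{\ab,i}R_i^{\ab}$ and use that $\widetilde{\P}_t^{\ab,i}$ is an $L^p$ contraction (Corollary~\ref{cor_1}); this gives $\|U_t^{\ab,i}f\|_{p,\ab} \le \|R_i^{\ab}f\|_{p,\ab} \le c_p\|f\|_{p,\ab}$ uniformly in $t>0$, so each $U_t^{\ab,i}$ extends as well.

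I do not expect a serious obstacle in this final step: the genuine difficulty has been front-loaded into Theorem~\ref{main_theorem}, which rests on the kernel comparison of Lemma~\ref{contr} (valid precisely for $\alpha_i,\beta_i\ge -1/2$) and the Littlewood--Paley--Stein machinery. The only point demanding care is the one flagged above, namely that $R_i^{\ab}f$ is \emph{not} a polynomial yet is still an explicit finite sum of $L^p$ eigenfunctions of $M_i^{\ab}$, so that feeding it into the two-sided inequality of Theorem~\ref{main_theorem}(b) is fully justified; once this is granted, the Cauchy--Riemann identity \eqref{cr2_jac_pol} does the real work.
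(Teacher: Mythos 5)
Your proposal is correct and follows essentially the same route as the paper: the Cauchy--Riemann identity \eqref{cr2_jac_pol} gives the pointwise domination $\widetilde{g}_i(R_i^{\ab}f) \le g(f)$, and the two-sided and one-sided bounds of Theorem \ref{main_theorem} (b) and (a) then yield the dimension-free estimate, with the extension handled by density and the contraction property of $\widetilde{\P}_t^{\ab,i}$ from Corollary \ref{cor_1}. Your extra remarks --- that $R_i^{\ab}f$ is a finite combination of the $L^p$ eigenfunctions $\sqr_i \jac^{(\alpha+e_i,\beta+e_i)}_{k-e_i}$ so that Theorem \ref{main_theorem} (b) applies without a mean-zero correction --- are sound points of care that the paper leaves implicit.
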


\begin{proof}
Given $i=1,\ldots,d$, by (\ref{cr2_jac_pol}) we have
$\partial_t \widetilde{\P}_t^{\ab,i} (R^{\ab}_i f) = - \delta_i \P^{\ab}_t f$, hence
\[
\widetilde{g}_i({R}^{\ab}_i f)(x) \le g(f)(x).
\]
Ergo the conclusion follows by Theorem \ref{main_theorem}.
\end{proof}

In what follows we use the symbols $R^{\ab}_i$, $U^{\ab,i}_t$
to denote also the extensions in Theorem \ref{main_result_of_paper}.
Moreover, we may assume that the extensions $U^{\ab,i}_t$ are given by
$\widetilde{\P}^{\ab,i}_t R^{\ab}_{i}$, so that they map $L^p(d\m_{\ab})$
into smooth functions, see Proposition \ref{diff_equations}.

\begin{cor} \label{conjugate_result}
Let $1 < p < \infty$, $\alpha,\beta \in [-1\slash 2, \infty)^d$ and $i \in \{1,\ldots,d\}$. Then
\begin{enumerate}
\item[(a)]
    there exists a constant $c_p$ such that for all $f \in L^p(d\m_{\ab})$
    \[
    \big\|\sup_{t>0} |U^{\ab,i}_t f| \big\|_{p,\ab} 
    \le c_p \|f\|_{p,\ab}, \quad \quad t>0;
    \]    
\item[(b)]
    for all $f \in L^p(d\m_{\ab})$
    \[
    U^{\ab,i}_t f \to R^{\ab}_i f, \quad t \to 0^{+},
    \]
    the convergence being both in $L^p(d\m_{\ab})$ and almost everywhere;
\item[(c)]
    the family $\{U^{\ab,i}_t \}_{t \ge 0}$,
    with $U^{\ab,i}_0 = R_i^{\ab}$,
    is strongly continuous in $L^p(d\m_{\ab})$.
\end{enumerate}
\end{cor}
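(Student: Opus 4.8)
The plan is to reduce all three assertions to two inputs that are already available: the $L^p$-boundedness of $R^{\ab}_i$ from Theorem \ref{main_result_of_paper}, and good analytic properties of the modified Poisson semigroup $\widetilde{\P}^{\ab,i}_t$ extracted from Corollary \ref{cor_1} and the maximal inequality \eqref{max_ineq}. Since $U^{\ab,i}_t f = \widetilde{\P}^{\ab,i}_t R^{\ab}_i f$, writing $g = R^{\ab}_i f$ — which lies in $L^p(d\m_{\ab})$ with $\|g\|_{p,\ab} \le c_p \|f\|_{p,\ab}$ — turns every statement into one about the orbit $\widetilde{\P}^{\ab,i}_t g$ of a single fixed element $g$.

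For (a) I would first record that $\widetilde{\P}^{\ab,i}_t$ is positivity preserving, since its kernel $\widetilde{G}^{\ab,i}_t$ is a strictly positive multiple of the strictly positive kernel $G^{(\alpha+e_i,\beta+e_i)}_t$. Hence $|\widetilde{\P}^{\ab,i}_t g| \le \widetilde{\P}^{\ab,i}_t |g| \le \P^{\ab}_t |g|$ pointwise, the last step by Corollary \ref{cor_1}. Taking the supremum in $t$ and applying \eqref{max_ineq} to $|g|$ yields $\|\sup_{t>0}|U^{\ab,i}_t f|\|_{p,\ab} \le C_p \|g\|_{p,\ab} \le c_p \|f\|_{p,\ab}$. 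The same pointwise domination shows that the maximal operator $g \mapsto \sup_{t>0}|\widetilde{\P}^{\ab,i}_t g|$ is bounded on $L^p(d\m_{\ab})$ for every $g$, a fact I will reuse for the almost everywhere convergence in (b).

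For (b) and (c) the heart of the matter is strong $L^p$-continuity of $\{\widetilde{\P}^{\ab,i}_t\}$ at $t=0$. On the span of the eigenfunctions $\sqr_i \jac^{(\alpha+e_i,\beta+e_i)}_k$, the semigroup acts by the finite multipliers $e^{-t\lambda_{k+e_i}^{1/2}} \to 1$, so $\widetilde{\P}^{\ab,i}_t \to \mathrm{Id}$ uniformly there, hence in $L^p$ because the measure is finite. That this span is dense in $L^p(d\m_{\ab})$ I would check by duality: an $h \in L^{p'}$ annihilating it makes $h\,\sqr_i\, d\m_{\ab}$ a finite measure killing every polynomial, hence the zero measure, forcing $h=0$. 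Together with the $L^p$-contractivity from Corollary \ref{cor_1}, a routine three-term estimate upgrades this to $\widetilde{\P}^{\ab,i}_t g \to g$ in $L^p$ for all $g$, which is the $L^p$ convergence in (b). The a.e. convergence then follows from the Banach principle: convergence is uniform on the dense span, and the maximal bound from (a) controls $\limsup_{t\to 0^+}|\widetilde{\P}^{\ab,i}_t g - g|$, forcing it to vanish a.e. Applying both with $g = R^{\ab}_i f$ gives (b).

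Part (c) is then formal. Strong continuity at $t=0$ was just established, and at any $t_0>0$ it follows by writing the increment as $\widetilde{\P}^{\ab,i}_{\min(t,t_0)}\big(\widetilde{\P}^{\ab,i}_{|t-t_0|} - \mathrm{Id}\big)$ and bounding its norm by $\|\widetilde{\P}^{\ab,i}_{|t-t_0|}g - g\|_{p,\ab}$ via contractivity. Since $U^{\ab,i}_t f = \widetilde{\P}^{\ab,i}_t g$ is the orbit of the fixed $g = R^{\ab}_i f$, with $U^{\ab,i}_0 f = g = R^{\ab}_i f$, continuity of $t \mapsto U^{\ab,i}_t f$ on $[0,\infty)$ is immediate. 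I expect the one genuinely delicate point to be the strong $L^p$-continuity at $t=0$ for large $p$: there the spectral ($L^2$) argument is insufficient, and one really needs the density of $\{\sqr_i P : P \text{ polynomial}\}$ in $L^p(d\m_{\ab})$ combined with the uniform $L^p$-contractivity of the modified semigroup.
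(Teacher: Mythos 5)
Your proposal is correct and follows essentially the same route as the paper: part (a) comes from the pointwise domination $\sup_{t>0}\big|U^{\ab,i}_t f\big| \le \P^{\ab}_{*}\big(|R^{\ab}_i f|\big)$ supplied by Corollary \ref{cor_1} together with \eqref{max_ineq} and Theorem \ref{main_result_of_paper}, while parts (b) and (c) are exactly the ``standard arguments'' the paper invokes without detail — your density claim, three-term estimate, maximal-function (Banach principle) step, and contraction-semigroup continuity at $t_0>0$ are precisely what is meant. The only cosmetic slip is that $\widetilde{G}^{\ab,i}_t$ is the kernel of the heat semigroup $\widetilde{T}^{\ab,i}_t$ rather than of $\widetilde{\P}^{\ab,i}_t$; positivity of the latter follows from the subordination formula \eqref{sub_tild}, which leaves your argument intact.
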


\begin{proof}
Item (a) is a consequence of Theorem \ref{main_result_of_paper}, (\ref{max_ineq})
and the fact that
\begin{equation} \label{X}
\sup_{t>0} |U^{\ab,i}_t f(x)| \le \P^{\ab}_{*} \big(R^{\ab}_i f\big)(x),
    \quad \quad x \in (-1,1)^d.
\end{equation}
Statements (b) and (c) are justified by standard arguments with the aid of (a) and \eqref{X}.
\end{proof}

\begin{rem} \label{rem_1}
When $p=2$ it is easy to compute that for the full range of $\alpha,\beta \in (-1,\infty)^d$
\[
\big{\|} |\mathcal{R}^{\ab} f |_{\ell^2} \big{\|}_{2,\ab}  =
     \big{\|}\Pi_0 f\big{\|}_{2,\ab}, \quad \quad f \in L^2(d\m_{\ab}),
\]
with $|\cdot|_{\ell^2}$ denoting the Euclidean norm in $\R^d$.
Indeed, since for $f \in L^2(d\m_{\ab})$ 
\[
R^{\ab}_i f = \sum _{|k|>0} a_k(f)
     \frac{1}{2}(\lambda_k)^{-1\slash 2}(k_i+\alpha_i+\beta_i+1)
     \sqr_i \jac_{k-e_i}^{(\alpha+e_i,\beta+e_i)},
\]
by Parseval's identity and \eqref{l2norm} we get
\begin{align*}
\big{\|} |\mathcal{R}^{\ab} f |_{\ell^2} \big{\|}^2_{2,\ab}
& =  \sum_{i=1}^d \sum_{|k|>0} |a_k(f)|^2 \frac{(k_i+\alpha_i+\beta_i+1)^2}{4\lambda_k}
    \big\|\sqr_i\jac^{(\alpha+e_i,\beta+e_i)}_{k-e_i}\big\|^2_{2,\ab}\\
&   =
 \sum_{|k|>0} |a_k(f)|^2 \sum_{i=1}^d \frac{k_i(k_i+\alpha_i+\beta_i+1)}{\lambda_k}
    \|\jac^{\ab}_k\|^2_{2,\ab}\\
&    =
\sum_{|k|>0} |a_k(f)|^2 \|\jac^{\ab}_k\|^2_{2,\ab} \\
&    =  \|\Pi_0 f\|^2_{2,\ab}.
\end{align*}
A similar computation shows that for all $\alpha,\beta \in (-1,\infty)^d$,
$$
\Big{\|} \big|\big(U^{\ab,1}_t f,\ldots,U^{\ab,d}_t f\big)
    \big|_{\ell^2} \Big{\|}_{2,\ab} \le \|\Pi_0 f\|_{2,\ab}, \qquad t>0.
$$
\end{rem}

\begin{rem}
If $1<p<\infty$ and $\alpha,\beta \in [-1\slash 2,\infty)^d$ then the (dimension dependent) estimate
$$
c_{d,p} \|f\|_{p,\ab} \le \big\||\mathcal{R}^{\ab}f|_{\ell^2}\big\|_{p,\ab}, 
\qquad f \in L^p(d\m_{\ab}),
$$
holds under the restriction
$\int f d\m_{\ab} =0$. This follows by a standard duality argument, taking into account
the upper bound from Theorem \ref{main_result_of_paper} and the isometry of Remark \ref{rem_1}.
\end{rem}

We finally verify that the conjugate Poisson operators $U_t^{\ab,i}$ are given on 
$L^p(d\m_{\ab})$ by the appropriate series expansions.

\begin{prop} \label{cpintrep}
Let $\alpha,\beta \in (-1,\infty)^d$ and $f \in L^1(d\m_{\ab})$. For
$i \in \{1,\ldots,d\}$, $t>0$ and $x \in (-1,1)^d$ define
\begin{equation} \label{cpiser}
f^{\ab}_i(t,x) = \sum_{|k|>0} a_k(f) \frac{1}{2}\lambda_k^{-1\slash 2}(k_i+\alpha_i+\beta_i+1)
    e^{-t \lambda_k^{1\slash 2}} \sqr_i(x) \jac_{k-e_i}^{(\alpha+e_i,\beta+e_i)}(x).
\end{equation}
The above series converges pointwise, $f^{\ab}_i(t,x)$ is a $C^{\infty}$ function
of $(t,x) \in (0,\infty) \times (-1,1)^d$ and \eqref{cr1_jac_pol}-\eqref{cr5_jac_pol} hold with
$U_t^{\ab,i}f(x)$ replaced by $f_i^{\ab}(t,x)$.
Moreover, when $\alpha,\beta \in [-1\slash 2, \infty)^d$ and $f \in L^p(d\m_{\ab})$ for some
$p>1$, then 
\begin{equation} \label{cpieq}
U^{\ab,i}_t f(x) = f^{\ab}_i(t,x), \qquad t>0, \quad x \in (-1,1)^d.
\end{equation}
\end{prop}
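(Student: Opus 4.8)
The plan is to handle the two assertions of Proposition \ref{cpintrep} in turn: first, for $f \in L^1(d\m_{\ab})$ and all $\alpha,\beta \in (-1,\infty)^d$, to show that \eqref{cpiser} defines a smooth function obeying the Cauchy--Riemann system, and then, under the stronger hypotheses $p>1$ and $\alpha,\beta \in [-1\slash 2,\infty)^d$, to identify it with $U^{\ab,i}_t f$. For convergence and smoothness I would reduce to the argument already carried out in Proposition \ref{diff_equations}. Reindexing by $k=m+e_i$ (the terms with $k_i=0$ vanish by the standing convention), the series \eqref{cpiser} takes the form of a modified Poisson series \eqref{def_mod_poisson},
\[
f^{\ab}_i(t,x) = \sum_{m \in \N^d} b_m \, \e^{-t\lambda_{m+e_i}^{1\slash 2}} \sqr_i(x) \jac^{(\alpha+e_i,\beta+e_i)}_m(x), \qquad b_m = a_{m+e_i}(f)\, \tfrac12 \lambda_{m+e_i}^{-1\slash 2}(m_i+\alpha_i+\beta_i+2).
\]
By \eqref{jac_est} and \eqref{L2normest} the coefficients $a_{m+e_i}(f)$ grow at most polynomially in $m$, and since $\lambda_{m+e_i}\ge \alpha_i+\beta_i+2>0$ the same is true of the $b_m$; the factor $\e^{-t\lambda_{m+e_i}^{1\slash 2}}$ then dominates, so the series and all its termwise $t$- and $x$-derivatives converge locally uniformly on $(0,\infty)\times(-1,1)^d$, yielding pointwise convergence and $C^{\infty}$ smoothness.

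Granted this termwise differentiability, I would verify \eqref{cr1_jac_pol}--\eqref{cr5_jac_pol} (with $U^{\ab,i}_t f$ replaced by $f^{\ab}_i$) by matching coefficients with the likewise termwise-differentiable Poisson series $\P^{\ab}_t f = \sum_k a_k(f)\e^{-t\lambda_k^{1\slash 2}}\jac^{\ab}_k$. Each identity then reduces to the corresponding one for a single Jacobi polynomial, already checked in Section \ref{sec:Riesz}: \eqref{cr1_jac_pol} and \eqref{cr2_jac_pol} come from \eqref{diff_jac_pol}, \eqref{cr3_jac_pol} from \eqref{adj_diff}, and \eqref{cr5_jac_pol} from Lemma \ref{lem_intr} (which makes $\sqr_i\jac^{(\alpha+e_i,\beta+e_i)}_m$ an eigenfunction of $M^{\ab}_i$ with eigenvalue $\lambda_{m+e_i}$) together with $\partial_t^2 \e^{-t\lambda^{1\slash 2}} = \lambda\, \e^{-t\lambda^{1\slash 2}}$. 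The local uniform convergence of the twice-differentiated series is what legitimizes applying the second-order operators $\J^{\ab}$ and $M^{\ab}_i$ term by term.

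For the identification \eqref{cpieq}, fix $\alpha,\beta\in[-1\slash 2,\infty)^d$, $p>1$ and $f \in L^p(d\m_{\ab})$. By the convention adopted after Theorem \ref{main_result_of_paper} we have $U^{\ab,i}_t f = \widetilde{\P}^{\ab,i}_t g$ with $g:=R^{\ab}_i f$, which lies in $L^p(d\m_{\ab})\subset L^1(d\m_{\ab})$ since the measure is finite and $R^{\ab}_i$ is $L^p$-bounded (Theorem \ref{main_result_of_paper}). By the $L^1$ theory of Section \ref{sec:supplementary}, $\widetilde{\P}^{\ab,i}_t g(x)$ is then given pointwise by \eqref{def_mod_poisson}, that is by $\sum_m a^i_m(g)\e^{-t\lambda_{m+e_i}^{1\slash 2}}\sqr_i(x)\jac^{(\alpha+e_i,\beta+e_i)}_m(x)$. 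Comparing with \eqref{cpiser}, it suffices to establish the coefficient identity
\[
a^i_m\big(R^{\ab}_i f\big) = a_{m+e_i}(f)\, \tfrac12\lambda_{m+e_i}^{-1\slash 2}(m_i+\alpha_i+\beta_i+2), \qquad m \in \N^d .
\]
For polynomials $f$ this is precisely \eqref{riesz_pol}. Both sides are continuous linear functionals of $f$ on $L^p(d\m_{\ab})$ --- the left one because $R^{\ab}_i$ is $L^p$-bounded and pairing against the bounded function $\sqr_i\jac^{(\alpha+e_i,\beta+e_i)}_m$ is continuous on the finite measure space, the right one for the latter reason --- so the identity passes from the dense subspace of polynomials to all of $L^p(d\m_{\ab})$. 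Substituting and reindexing $k=m+e_i$ returns \eqref{cpiser}, giving \eqref{cpieq} pointwise for every $x$.

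The only genuinely delicate step is this last passage of the coefficient identity from polynomials to $L^p$, and it is here that both standing hypotheses are essential: $p>1$ is needed so that $R^{\ab}_i$ is a bounded operator, hence $R^{\ab}_i f$ is a genuine function in $L^p(d\m_{\ab})\subset L^1(d\m_{\ab})$ to which the $L^1$ series applies, while $\alpha,\beta\ge -1\slash 2$ is what guarantees, via Theorem \ref{main_result_of_paper} and Corollary \ref{cor_1}, the boundedness of $R^{\ab}_i$ and the contractivity of $\widetilde{\P}^{\ab,i}_t$ underlying the very definition of the extension $U^{\ab,i}_t$. Everything else is the polynomial-growth-versus-exponential-decay bookkeeping already familiar from Proposition \ref{diff_equations}.
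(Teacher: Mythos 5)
Your proposal is correct, and its first half (pointwise convergence, $C^\infty$ smoothness, and the Cauchy--Riemann identities \eqref{cr1_jac_pol}--\eqref{cr5_jac_pol} via termwise differentiation, reducing everything to the argument of Proposition \ref{diff_equations}) is essentially identical to the paper's. For the identification \eqref{cpieq}, however, you take a genuinely different route. The paper approximates $f$ in $L^p(d\m_{\ab})$ by polynomials $f_n$, uses Corollary \ref{conjugate_result}\,(a) to get $U^{\ab,i}_t f_n \to U^{\ab,i}_t f$ in $L^p(d\m_{\ab})$, and then estimates the series for $f^{\ab}_i(t,\cdot)-U^{\ab,i}_t f_n$ directly --- via \eqref{jac_est}, H\"older's inequality applied to $a_k(f-f_n)$, and \eqref{L2normest} --- by a constant times $\|f-f_n\|_{p,\ab}$, which yields equality almost everywhere, upgraded to everywhere by a continuity argument. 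You instead factorize $U^{\ab,i}_t f=\widetilde{\P}^{\ab,i}_t(R^{\ab}_i f)$ with $R^{\ab}_i f\in L^p(d\m_{\ab})\subset L^1(d\m_{\ab})$, invoke the pointwise series representation \eqref{def_mod_poisson} of the modified Poisson semigroup on $L^1(d\m_{\ab})$, and transfer the single coefficient identity
\[
a^i_m\big(R^{\ab}_i f\big)=\tfrac{1}{2}\,\lambda_{m+e_i}^{-1\slash 2}(m_i+\alpha_i+\beta_i+2)\,a_{m+e_i}(f),
\qquad m\in\N^d,
\]
from polynomials to $L^p(d\m_{\ab})$ by continuity of both sides, using the $L^p$-boundedness of $R^{\ab}_i$ from Theorem \ref{main_result_of_paper} and the finiteness of the measure. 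Both arguments are sound and rest on the same pillars (density of polynomials, Theorem \ref{main_result_of_paper}, Corollary \ref{cor_1}, and the convention $U^{\ab,i}_t=\widetilde{\P}^{\ab,i}_t R^{\ab}_i$ adopted after Theorem \ref{main_result_of_paper}). What each buys: your route gives equality for \emph{every} $x$ at once, with no final a.e.-to-everywhere continuity step; the paper's route produces, as a by-product, the quantitative estimate $\|f^{\ab}_i(t,\cdot)\|_{p,\ab}\le C\,\|f\|_{p,\ab}$ valid for the full range $\alpha,\beta\in(-1,\infty)^d$ and $1\le p\le\infty$ --- exactly the norm bound exploited in the remark immediately following the proposition --- which is not recoverable from your argument, since yours leans on the boundedness of $R^{\ab}_i$ and hence on the restrictions $\alpha,\beta\in[-1\slash 2,\infty)^d$ and $1<p<\infty$.
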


\begin{proof}
Pointwise convergence of the series in \eqref{cpiser} is justified like
the convergence of the series defining $T^{\ab}_t$ in \eqref{def_heat_ser}.
The remaining statements, except the last one, are proved by arguments similar
to those of the proof of Proposition \ref{diff_equations}.
Thus it remains to show \eqref{cpieq}.

Let $f \in L^p(d\m_{\ab})$ and take a sequence $\{f_n\}$ of polynomials
converging in $L^p(d\m_{\ab})$ to $f$. 
Then $U^{\ab,i}_t f_n \to U^{\ab,i}_t f$ in $L^p(d\m_{\ab})$ by Corollary
\ref{conjugate_result} (a). On the other hand, since $U^{\ab,i}_t f_n$
coincides with $(f_n)_i^{\ab}(t,\cdot)$, we see that
$$
f^{\ab}_i(t,x) - U^{\ab,i}_t f_n(x) =
\sum_{|k|>0} a_k(f-f_n) \frac{k_i+\alpha_i+\beta_i+1}{2\lambda_k^{1\slash 2}}
    e^{-t \lambda_k^{1\slash 2}} \sqr_i(x) \jac_{k-e_i}^{(\alpha+e_i,\beta+e_i)}(x).
$$
Now using \eqref{jac_est}, H\"older's inequality applied to $a_k(f-f_n)$ and
\eqref{L2normest}, we can estimate
the $L^p(d\m_{\ab})$ norm of the last sum by a constant times the
$L^p(d\m_{\ab})$ norm of $f-f_n$. This implies that $U^{\ab,i}_t f_n$ converges
to $f^{\ab}_i(t,\cdot)$ in $L^p(d\m_{\ab})$. Therefore the functions
$U^{\ab,i}_t f$ and $f^{\ab}_i(t,\cdot)$ are equal in $L^p(d\m_{\ab})$ and
hence almost everywhere on $(-1,1)^d$. The desired result follows by a continuity argument.
\end{proof}

Note that the norm estimate obtained in the above proof shows also that the mapping
$f \mapsto f^{\ab}_i(t,\cdot)$ is continuous on $L^p(d\m_{\ab})$, $1\le p \le \infty$, for
each fixed $t>0$ and $\alpha,\beta$ in the full range $(-1,\infty)^d$.

We shall now augment the conjugacy scheme for Jacobi expansions by introducing additional Riesz
transforms and conjugate Poisson integrals.
In particular, this will clarify the role played by the $\delta_i^*$, the adjoint Jacobi derivatives.

Recall that formally $R^{\ab}_i = \delta_i (\mathcal{J}^{\ab})^{-1\slash 2}\Pi_0.$
It is natural to think of a supplementary system of Riesz-Jacobi transforms defined by means of $\delta^{*}_i,$
instead of $\delta_i.$ However, a simple replacement of $\delta_i$ by its adjoint in the definition of
$R^{\ab}_i$ would not be appropriate since, in particular, such operators would behave badly
even in $L^2(d\m_{\ab})$; for instance it is easy to check that if $\alpha_i=\beta_i=0$ for some $i$
then the operator $\delta^*_i (\mathcal{J}^{\ab})^{-1\slash 2}\Pi_0$ maps $P^{\ab}_{e_i}$ to a function
which is not in $L^2(d\m_{\ab})$.
It turns out that the operators that fit into our setting are given formally by
$$
\overline{R}^{\ab}_i = \delta^{*}_i ({M}^{\ab}_i)^{-1\slash 2}, \qquad i=1,\ldots,d.
$$
We shall see that $\overline{R}^{\ab}_i$ coincides with the adjoint operator $\big(R^{\ab}_i\big)^{*}$.
Indeed, by the identity \eqref{adj_diff} and Lemma \ref{lem_intr} it follows that
\begin{equation} \label{adj_riesz}
\overline{R}^{\ab}_i \big( \sqr_i P_{k-e_i}^{(\alpha+e_i,\beta+e_i)}\big)
= 2 k_i \lambda_k^{-1\slash 2} P^{\ab}_k.
\end{equation}
Consequently, for $f \in L^2(d\varrho_{\ab})$ with the expansion
$f = \sum_{k\in \N^d} a^i_k(f) \sqr_i P_k^{(\alpha+e_i,\beta+e_i)},$ we have
$$
\overline{R}_i^{\ab} f = 2 \sum_{k \in \N^d} a^i_k(f) (k_i+1) \lambda_{k+e_i}^{-1\slash 2} P^{\ab}_{k+e_i},
$$
the series being convergent in $L^2(d\varrho_{\ab}).$
On the other hand, since formally
$\big(R^{\ab}_i\big)^{*} = \big(\delta_i (\mathcal{J}^{\ab})^{-1\slash 2} \Pi_0\big)^{*}
= \Pi_0 (\mathcal{J}^{\ab})^{-1\slash 2} \delta^{*}_i,$ another use of \eqref{adj_diff}
produces
$$
\big(R^{\ab}_i\big)^{*} \big( \sqr_i P_{k-e_i}^{(\alpha+e_i,\beta+e_i)}\big)
= 2 k_i \lambda_k^{-1\slash 2} P^{\ab}_k.
$$
Thus, in view of Lemma \ref{lem_intr} and Remark \ref{rem_1}, we see that
$\big(R^{\ab}_i\big)^{*} = \overline{R}^{\ab}_i$ in $L^2(d\varrho_{\ab})$. Furthermore,
when $\alpha$ and $\beta$ are such that $R^{\ab}_i$ is bounded on 
$L^{q}(d\varrho_{\ab})$, $1\slash p + 1\slash q=1$, for some $1<p<\infty$, then
the operator $\overline{R}^{\ab}_i$, defined initially on the subspace of polynomials
multiplied by $\sqr_i$, has a bounded extension to $L^p(d\m_{\ab})$ given by the adjoint
$\big(R^{\ab}_i\big)^{*}$ taken in the Banach space sense.
We denote this extension by the same symbol $\overline{R}^{\ab}_i$.

Another straightforward computation with the aid of \eqref{riesz_pol} and \eqref{adj_riesz} furnishes
\begin{equation} \label{cl_an}
\sum_{j=1}^d \overline{R}^{\ab}_j R^{\ab}_j = \Pi_0,
\end{equation}
on the space of all polynomials; thus
this identity is also valid on $L^p(d\varrho_{\ab})$, $1<p<\infty$, under the assumption that 
$\alpha,\beta \in [-1\slash 2,\infty)^d$
(here $\Pi_0 f = f - \int f \,d\varrho_{\ab} \slash \int d\varrho_{\ab}$).
Note that the above formula is an analogue of the well-known relation in the Euclidean setting, where
the classic Riesz transforms $R_j = \partial_j (-\Delta)^{-1\slash 2}$ satisfy $\sum_j R_j^2 = - I.$
In the Jacobi setting, however, the associated partial derivatives are not formally skew-symmetric and
do not commute with the Jacobi operator, and hence we have to take $\overline{R}^{\ab}_j R^{\ab}_j$ rather than $\big(R^{\ab}_j\big)^{2}.$

Passing to conjugate Poisson integrals, recall that
$U^{\ab,i}_t = \widetilde{\P}^{\ab,i}_t R^{\ab}_i.$
We define a supplementary system of conjugate Poisson integrals:
$$
\overline{U}^{\ab,i}_{\! t} = {\P}^{\ab}_t \overline{R}^{\ab}_i, \qquad t>0, \quad i=1,\ldots,d.
$$
Note that this definition makes sense in $L^2(d\varrho_{\ab})$ since, by \eqref{adj_riesz},
$$
\overline{U}^{\ab,i}_t \big( \sqr_i P_{k-e_i}^{(\alpha+e_i,\beta+e_i)} \big)
    = 2 k_i \lambda_k^{-1\slash 2} e^{-t \lambda_k^{1\slash 2}} P^{\ab}_k.
$$

It is easily verified that $\big({U}^{\ab,i}_t\big)^{*} = \overline{U}^{\ab,i}_{\! t}$
in $L^2(d\varrho_{\ab}),$
and the same is true in $L^p(d\varrho_{\ab})$ whenever ${U}^{\ab,i}_t$ is bounded on
$L^{q}(d\varrho_{\ab})$, $1\slash p + 1\slash q =1$.
More precisely, in the $L^p$ case, $\big({U}^{\ab,i}_t\big)^{*}$ is a bounded extension
of
$\overline{U}^{\ab,i}_{\! t}$ defined initially on the subspace of polynomials
multiplied by the factor $\sqr_i$; we denote this extension by the same
symbol $\overline{U}^{\ab,i}_{\! t}$.
Further, for reasonable $f,$ the Cauchy-Riemann type equations
\begin{equation} \label{hh1}
\delta_j^{*} \widetilde{\P}^{\ab,j}_t f = - {\partial_t} \overline{U}^{\ab,j}_{\! t} f,
    \qquad j=1,\ldots,d,
\end{equation}
hold, and in one dimension we also have
\begin{equation} \label{hh2}
\delta_1 \overline{U}^{\ab,1}_{\! t} f = -\partial_t \widetilde{\P}^{\ab,1}_t f.
\end{equation}
Moreover, the harmonicity equations
\begin{equation} \label{hh3}
(\partial_t^2 - \J^{\ab}) \overline{U}^{\ab,j}_{\! t} f = 0,
    \qquad j=1,\ldots,d,
\end{equation}
are satisfied and the relation
\begin{equation} \label{hh4}
\sum_{j=1}^d \overline{U}^{\ab,j}_{\! t} {U}^{\ab,j}_t f = \P^{\ab}_{2t} \Pi_0 f.
\end{equation}
holds. Proving \eqref{hh1}-\eqref{hh3} is immediate
when $f$ is a linear combination of $P_k^{(\alpha+e_j,\beta+e_j)},$ $k \in \N^d,$ multiplied by
$\sqr_j;$ the last identity is easily verified for (Jacobi) polynomials.

Assume that $\alpha$ and $\beta$ are such that the $R^{\ab}_i$ are bounded on
$L^p(d\varrho_{\ab})$ for all $1<p<\infty.$ Then
$\overline{U}^{\ab,i}_{\! t} = {\P}^{\ab}_t (R^{\ab}_i)^{*}$ and, in view of the mapping properties
of the maximal operator ${\P}^{\ab}_{*},$ results analogous to the statements of
Corollary \ref{conjugate_result} follow for $\overline{U}^{\ab,i}_{\! t}.$
Moreover, a result analogous to Proposition \ref{cpintrep} is valid for
$\overline{U}^{\ab,i}_{\! t}$, the proof being similar to that for
$U^{\ab,i}_t$. In particular, it follows that \eqref{hh1}-\eqref{hh4} hold for
all $f \in L^p(d\m_{\ab})$, $1<p<\infty$, provided that
$\alpha,\beta \in [-1\slash 2, \infty)^d$.

\begin{rem}
Many identities of this section may be easily verified in a symbolic way,
which is rigorous at least on a dense subspace of $L^2(d\m_{\ab})$.
For example, we have
$$
\sum_j \overline{R}_j^{\ab} R_j^{\ab} =
\sum_j \big( R_j^{\ab} \big)^* R_j^{\ab} = \sum_j \Pi_0 \big( \J^{\ab} \big)^{-1\slash 2} \delta_j^*
    \delta_j \big( \J^{\ab} \big)^{-1\slash 2} \Pi_0 = \Pi_0
$$
and, with the aid of \eqref{spf2} and the above,
\begin{align*}
\sum_j \overline{U}_t^{\ab,j} U_t^{\ab,j} & =
    \sum_j \big( R_j^{\ab} \P_t^{\ab}\big)^{*} \big(R^{\ab}_j \P^{\ab}_t \big) \\
& = \P_t^{\ab} \Big(\sum_j \big(R^{\ab}_j \big)^{*} R^{\ab}_j \Big) \P^{\ab}_t \\
& = \P^{\ab}_{2t} \Pi_0.
\end{align*}
\end{rem}

\begin{rem}
The scheme of conjugacy introduced in this section, including the supplementary Riesz transforms and
conjugate Poisson integrals, is universal in the sense that it
also fits precisely into the settings of Hermite and Laguerre polynomial expansions.
\end{rem}

\begin{rem}
In the one-dimensional setting, the $L^p$ boundedness of the Riesz-Jacobi transform $R^{\ab}_1$ may
be obtained in
a much shorter way $\!^{\dag}$, by means of Muckenhoupt's general transplantation theorem for Jacobi
series \cite[Theorem 1.14, Corollary 17.11]{Mu3},
see also \cite[Sections 3,4]{Stem} where the idea has recently been applied to study
Riesz transforms for expansions based on orthonormalized Jacobi polynomials $\!^{\ddag}$.
In our setting, this approach delivers also weighted results and, in addition,
the full range $(-1,\infty)$ of $\alpha,\beta$ is allowed.
Consequently, if $d=1$ the results of this section may be improved by assuming $\alpha,\beta >-1$
and considering appropriate weights.
\end{rem}

\footnotetext{$^{\dag}$ We thank Krzysztof Stempak for this remark.\\
          \indent $^{\ddag}$ The apparently more appropriate term
          ``Jacobi functions'' would be, alas, confusing.}


\end{document}